\newtheorem{theorem}{Theorem}
\newtheorem{lemma}{Lemma}
\newtheorem{definition}{Definition}
\newtheorem{problem}{Problem}
\newtheorem{observation}{Observation}
\newcommand{\oppos}{opposed}
\BODY\end{matrix}$}%
\BODY\end{flalign}}}
  \noindent\resizebox{0.5\textwidth}{!}{\usebox2}%
\title{On star-$k$-PCGs: Exploring class boundaries for small $k$ values\thanks{A shorter version of this paper appeared as the conference article in \cite{Monti2023}.}}
\author{Angelo Monti\inst{1}\orcidID{0000-0002-3309-8249} \and
Blerina Sinaimeri\inst{2}\orcidID{0000-0002-9797-7592} }
\author{A. Monti\footnote{Computer Science Department, Sapienza University of Rome, Italy,
\href{mailto:monti@di.uniroma1.it}{monti@di.uniroma1.it}}
 \and B. Sinaimeri\footnote{Luiss University, Rome, Italy. \href{mailto:bsinaimeri@luiss.it}{bsinaimeri@luiss.it}}}
\begin{document}

\maketitle

\begin{abstract}
A graph $G$ is a star-$k$-PCG if there exists a  non-negative edge weighted star tree $S$ and $k$ mutually exclusive intervals $I_1, I_2, \ldots , I_k$ of non-negative reals such that each vertex of $G$ corresponds to a leaf of $S$ and there is an edge between two vertices in $G$ if the distance between their corresponding leaves in $S$ lies in $I_1\cup I_2\cup\ldots \cup I_k$.  These graphs are related to different well-studied classes of graphs such as PCGs and multithreshold graphs. It is well known that for any graph $G$ there exists a $k$ such  that $G$ is a  star-$k$-PCG. Thus, for a given graph  $G$ it is interesting to know which is the minimum $k$ such that $G$ is a  star-$k$-PCG.   

In this paper we focus on classes of graphs where $k$ is constant and prove that circular graphs and two dimensional grid graphs are  both star-$2$-PCGs and that they  are not star-$1$-PCGs.
Moreover we show that $4$-dimensional grids are at least star-$3$-PCG.\\
\noindent
\textit{Keywords:} Pairwise compatibility graph; Multithreshold graph; Graph theory; grid graphs.
\end{abstract}
\section{Introduction}\label{sec1}
The categorization of graphs into different classes is fundamental in graph theory and its applications, as it allows for a more structured and focused study of their properties and behaviors. Indeed, each class of graphs, possess unique characteristics that make them suitable for specific problems and applications. In essence, the diversity of graph classes reflects the diversity of real-world problems they are used to model and solve.  In this paper, we concentrate on a specific class of graphs, referred to as \emph{star-$k$-PCGs}. A graph $G=(V,E)$ is a star-$k$-PCG if there exists a weight function $w: V \rightarrow \mathcal{R}^+$ and $k$ mutually exclusive intervals $I_1, I_2, \ldots I_k$, such that there is an edge $uv \in E$ if and only if $w(u)+w(v) \in \bigcup_i I_i$.  This class serves as a bridge between two well-established graph classes: pairwise compatibility graphs and multithreshold graphs.  

\paragraph{Connection with pairwise compatibility graphs (PCGs).} A graph $G$ is a $k$-PCG (known also as multi-interval PCG) if there exists a  non-negative edge weighted  tree $T$ and $k$ mutually exclusive intervals $I_1, I_2, \ldots , I_k $ of non-negative reals such that each vertex of $G$ corresponds to a leaf of $T$ and there is an edge between two vertices in $G$ if the distance between their corresponding leaves in $T$ lies in $\bigcup_i I_i$ (see \textit{e.g.} \cite{ahmed17}). Such tree $T$ is called the \emph{$k$-witness} tree of $G$. The concept of $1$-PCGs, also known as PCGs, originated from the problem of reconstructing phylogenetic trees \cite{KPM03}.  Moreover, PCGs are a generalization of the well-known $k$-\emph{leaf power} graphs \cite{NISHIMURA2002} and have proven valuable in describing and analyzing evolutionary processes \cite{Long2020}.
 
One of the most important open problems in the field is, whether given an integer $k$, the $k$-PCG can be recognized in polynomial time, and it is unknown whether this problem can be solved in polynomial time, even for the case of $k=1$. To make progress towards the solution of this problem, restrictions have been made on the topology of the $k$-witness tree into two main directions: a star and a caterpillar (see \emph{e.g.}\cite{Brandstdt2008,Calamoneri2013_threshold,Telle2022}). The class of star-$k$-PCGs is exactly the class of $k$-PCGs for which the witness tree is a star \cite{Monti2023,Monti23_ictcs2023}.  Figure~\ref{fig:example} depicts an example of a graph that is a star-$1$-PCG.  This topology constraint on the witness tree, has been proven valuable as  for star witness trees, the decision problem becomes simpler compared to the general case. Indeed, Xiao and Nagamochi \cite{Xiao2020} introduced the first polynomial-time algorithm for identifying graphs that are star-$1$-PCGs. Next,  Kobayashi \emph{et al.} in \cite{Kobayashi22} improved upon this result by introducing a new characterization of star-$1$-PCGs that led to a linear time recognition algorithm. 

\paragraph{Connection with multithreshold graphs.}
Multithreshold graphs were introduced by Jamison and Sprague \cite{Jamison2021} in 2020 as a generalization of the class of threshold graphs introduced by Chvátal and Hammer \cite{Chvtal1977} in 1977 and has since become one of the most prominent and well‐studied graph classes (see \cite{Mahadev_threshold}).  In a similar way, multithreshold graphs have gained considerable interest within the research community since their introduction, as evidenced by the following studies \cite{Jamison2021,Puleo2020,chen2022}.  Given real numbers $\theta_1, \theta_2, \ldots, \theta_k$, with $\theta_1 < \theta_2< \ldots < \theta_k$ we say that a  graph $G=(V,E)$ is a $k$-threshold graph with thresholds $\theta_1, \theta_2, \ldots, \theta_k$ if there exist  an assignment $r : V \rightarrow \mathcal{R}$
of real ranks to the vertices such that for every pair of distinct vertices
$u,v \in V$ we have $uv \in E $ if and only if the inequality $\theta_i \leq r(u) + r(v)$ holds for
an odd number of indices $i$.  
It was shown in \cite{Kobayashi22} that $2$-threshold graphs are exactly star-$1$-PCGs. Here we directly extend this claim by showing in Observation~\ref{prop:2k-threshold} that the class of star-$k$-PCGs is equivalent to the class of $2k$-threshold graphs.

\begin{figure}
\centering
    \begin{subfigure}[t]{0.25\textwidth}
    \centering
    \includegraphics[width=\linewidth]{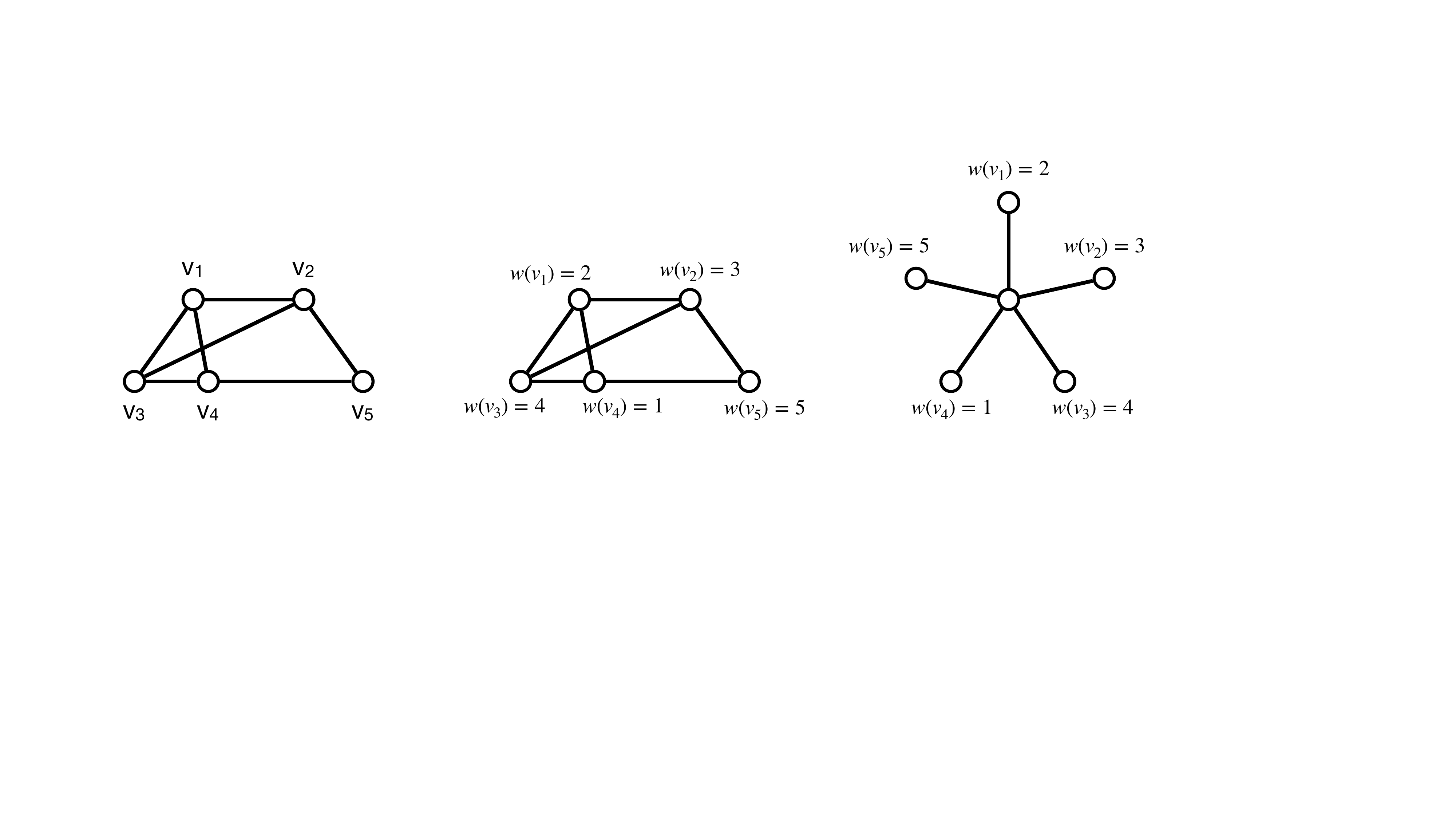}
    \caption{\label{fig:image1a}}
    \end{subfigure}
    \begin{subfigure}[t]{0.3\textwidth}
    \centering
    \includegraphics[width=\linewidth]{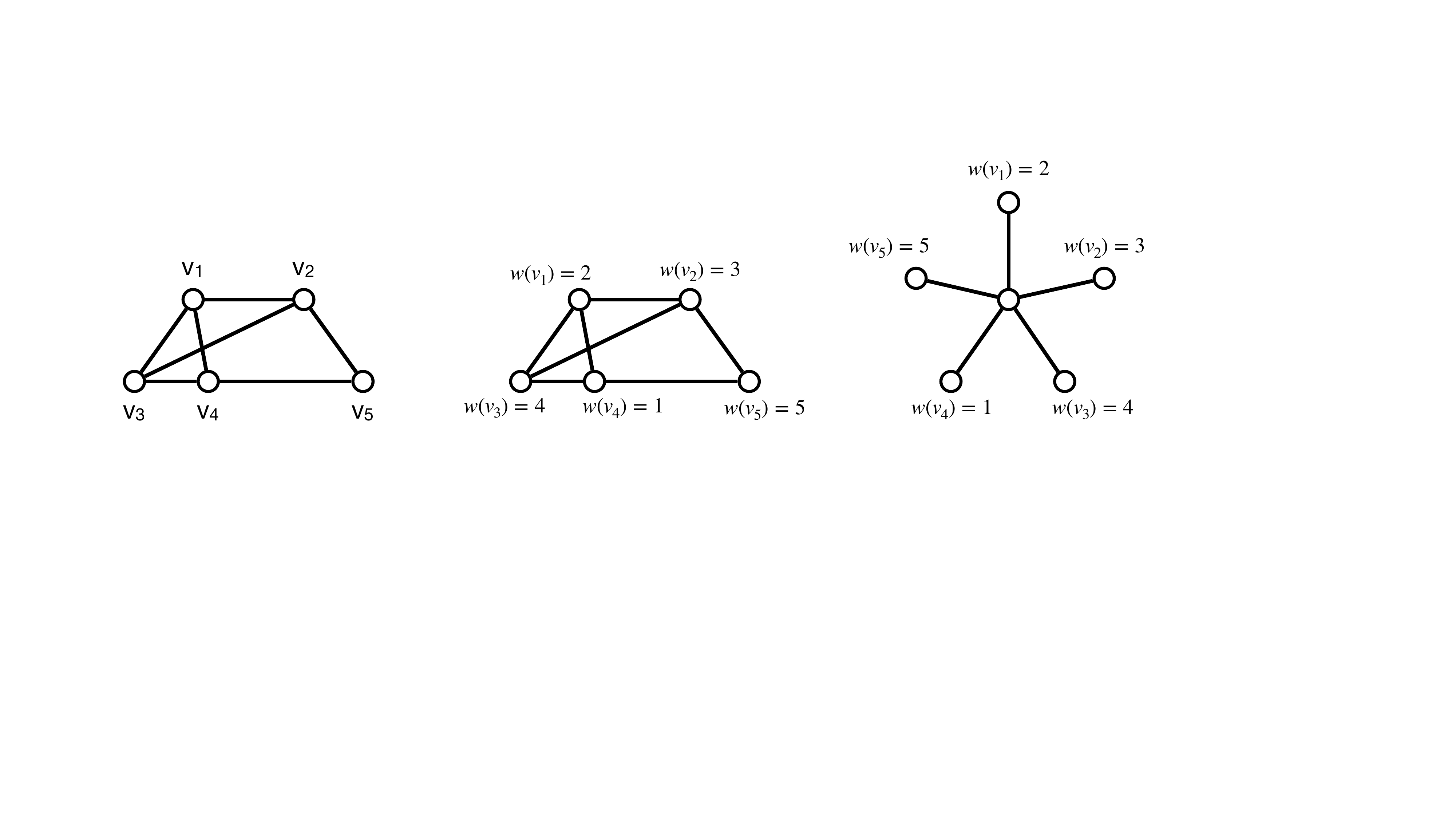}
    \caption{\label{fig:image1b}}
    \end{subfigure}
    \begin{subfigure}[t]{0.25\textwidth}
    \centering
    \includegraphics[width=\linewidth]{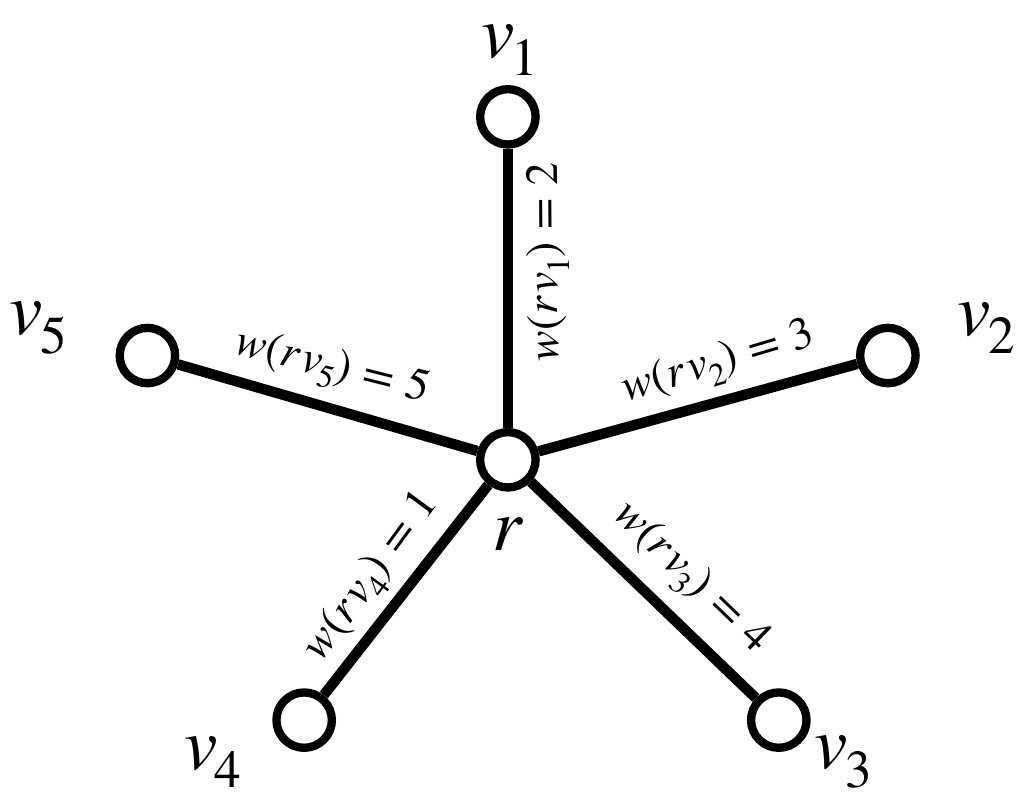}
    \caption{\label{fig:image1c}}
    \end{subfigure}
    \caption{(\subref{fig:image1a}) An example of a graph $G$ that is a star-$1$-PCG graph.  In (\subref{fig:image1b}) and (\subref{fig:image1c}) the $1$-witness graph and the $1$-witness star for which $G$ is a star-$1$-PCG with $I_1=[5,8]$.} 
\label{fig:example}
\end{figure}

\paragraph{Results}
It is already established that every graph $G$ is a star-$k$-PCG for some positive integer $k$ \cite{ahmed17}.  Hence, we introduce the following notation. 
\medskip
\begin{definition}\label{def:star_number}
Given a graph $G$, we define \emph{the star number}, $\gamma(G)$, to be the smallest positive integer $k$, such that $G$ is a star-$k$-PCG.
\end{definition}

\medskip
\noindent
The star number is not know even for graphs belonging to simple classes. In this paper we firstly focus on $n$-vertex graphs for small values of $n$.  Identifying the smallest graphs that are excluded from a class not only helps to define the boundaries of that class but could also help towards a characterization of the graph class through forbidden subgraphs. In this framework we consider the following question:  \emph{what is the smallest value of $n$ for which there exists an $n$-vertex graph that is not a star-$k$-PCG}? This question has been already investigated for related graphs classes. Indeed, it is known that the smallest graphs that are not  $1$-PCGs have 8 vertices \cite{Calamoneri2012,Durocher2015} while the smallest graphs that are not  $2$-PCGs must have at least 9 vertices \cite{Calamoneri2023}.  Here we determine the exact value of $\gamma(G)$ for all the graphs $G$ with at most 7 vertices. By doing so we show that the smallest graphs with star number 2 are only 4 and have exactly 5 vertices; the smallest graphs with star number 3 are only 3 and have exactly 7 vertices.

Next we study some simple graph classes: caterpillars, cycle graphs, grid graphs. Caterpillars have already been shown to be $2$-multithreshold \cite{Jamison20} and thus they are also star-$1$-PCGs. Here we provide a different construction in the context of star-$k$-PCGs. While path graphs are known to be star-$1$-PCGs \cite{Xiao2020}, cycle graphs are only shown to be $1$-PCGs of caterpillar witness trees \cite{Yanhaona09}. However, when considering a star tree structure, we prove that the star number of cycle graphs is 2. In \cite{hakim2022} it was proved that $2$-dimensional grid graphs are $1$-PCGs of a caterpillar. Here we prove that the star number of two dimensional grid graphs $G_{n_1,n_2}$,  is 1 if $\min\{n_1,n_2\}\leq 2$ and 2 otherwise. From the results in \cite{Jamison20} it can be easily shown that the star number $d$-dimensional grids is at least $d-3$. Here we improve this result for $d=4$, by showing that the star number of $4$-dimensional grids is at least $3$. All our constructions can be obtained in linear time.

\section{Preliminaries}\label{sec:Preliminaries}
In this paper we  only consider simple graphs, that is graphs that contain no loops or multiple edges. Additionally, we focus only undirected graphs and thus for simplicity, we use a notational shorthand, writing $uv$ to represent the unordered pair  $\{u,v\}$.  For a graph $G=(V,E)$ and a vertex $u\in V$, the set $N(u)=\{v: uv\in E\}$ is called the \emph{neighborhood} of $u$. 
A \emph{cycle} graph, denoted as $C_n$, $n\geq 3$,  is a graph that consists of a single cycle of $n$ vertices.  A \emph{caterpillar} is a tree in which all the vertices are within distance 1 of a central path. The central path contains only vertices of degree at least 2 (\textit{i.e.} vertices that are not leaves).

For any integer $n\geq 1$ we denote by $[n]$ the set $\{0,1,2,\ldots,n-1\}$.  A \emph{$d$-dimensional grid} graph $G_{n_1,\ldots,n_d}$, is a graph such that the vertex set is given by \mbox{$[n_1]\times [n_2] \times \ldots \times [n_d]$} and there is an edge between two vertices if and only if they differ in exactly one coordinate and the difference is 1. More formally, a vertex $u$ is described by its coordinates $(i_1,\ldots, i_d)$. For any dimension $j$ we denote by $u_j$ the coordinate of $u$ in the $j$-th dimension.  Two vertices $u$ and $u'$ are adjacent if there is a dimension $i$ such that $|u_{i} - u'_{i}| = 1$ and for all $l\neq i$, $u_{l}=u'_{l}$ (see Figure~\ref{fig:gr4x2} for an example).

Given a graph $G=(V,E)$ and a function $w : V \rightarrow \mathcal{R}^+$, we denote by $G^w$ its weighted version, that is, $G$ supplemented with the weight function $w$ (see Figure.~\ref{fig:image1a}-~\ref{fig:image1b}). Moreover for each vertex $v\in V$,  $w(v)$ denotes the weight of the vertex $v$ and for each pair of vertices $u$ and $v$  of $G$ we denote by $w(uv)$ the sum $w(u)+w(v)$.

Given $G^w$ we define $\sigma(G^w)=v_1,v_2\ldots,v_{|V|}$
as the sequence of vertices in $G^w$, arranged in non-descending order based on their weights. 
 Given a family $\mathcal{S}$, of $r$ pairwise disjoint subsets of $V$  we will use the notation $S_1, S_2, \ldots, S_r$ where $S_i \in \mathcal{S}$, to denote the partial order where for all $1\leq i \leq r$ the weights of the vertices in $S_i$ are all less or equal to the weights of all vertices in $S_{i+1}$. 

We say that the neighborhood $N(u)$ is \emph{consecutive} with respect to $\sigma(G^w)$ if for $v_i,v_j \in N(u)$ with $i< j$ implies $v_t \in N(u)$ for any $i<t<j$.

\medskip
\begin{definition}\label{def:star-k}
A graph $G=(V,E)$ is a star-$k$-PCG if there exists a weight function $w: V \rightarrow \mathcal{R}^+$ and $k$ mutually exclusive intervals $I_1, I_2, \ldots I_k$, such that there is an edge $uv \in E$ if and only if $w(u)+w(v) \in \bigcup_i I_i$. Such weighted graph $G^w$  is called the \emph{$k$-witness}  of $G$.
\end{definition}

The following result is a straightforward generalization of Observation 2.2 in \cite{Kobayashi22}.

\medskip
\begin{observation}\label{prop:2k-threshold}
Given an integer $k$, a graph is a $2k$-threshold graph if and only if it is a star-$k$-PCG.
\end{observation}
\begin{proof}
($\leftarrow$)  Let $G = (V, E)$ be a $2k$-threshold graph defined by $r: V \rightarrow \mathcal{R}$  and  $\theta_1 < \theta_2< \ldots < \theta_k$  such that for every pair of distinct vertices $u,v \in V$ we have $uv \in E$ if and only if $\theta_i \leq r(u) + r(v)$ for an odd number of indices $i$.   Notice that \emph{w.l.o.g.} we can assume that $\theta_i \geq 0$ for all $1\leq i \leq 2k$ and that $r(v) > 0$ for all $v\in V$.  To show that $G$ is a $k$-star-PCG we construct $G^w$ as follows.  For every $v\in V$ we define $w(v)=r(v)$ and  $I_i= [\theta_{2i-1}, \theta_{2i})$ with $1\leq i \leq k$. It is easy to see that if we want to have closed intervals as in the definition of star-$k$-PCGs, it is sufficient to set $I'_i= [\theta_{2i-1}, \theta_{2i}']$ where $\theta_{2i}'=\max\{w(e): e \in E \text{ and } w(e) \in I_i\}$. Finally, by construction for all $e=uv$ it holds that $w(e) \in I_i$ if and only if $\theta_{2i-1} \leq r(u)+r(v) < \theta_{2i}$, meaning that the inequality is valid for an odd number of indices.

($\rightarrow$) Now, let $G$ be a star-$k$-PCG and let  $G^w$ be its $k$-witness for $I_i= [a_i, b_i]$ with $1\leq i \leq k$.  Notice that we can substitute $I_i=[a_i, b_i]$ with $I_i=[a_i, b'_i)$ as follows. For $1\leq i <k$ we set $b'_i=\frac{a_{i+1}+b_i}{2}$ if there is no edge $e$ for which $b_i < w(e) < a_{i+1}$, otherwise  we set $b'_i=\min \{w(e): e \not\in E \text{ and } b_i < w(e) < a_{i+1}\}$. If $i=k$ we set $b'_k=b_k+1$  if there is no edge $e$ for which $b_k < w(e)$, otherwise we set  $b'_k=\min \{w(e): e \not\in E \text{ and } b_k < w(e)\}$. It is easy to see that $G^w$ still remains a $k$-witness. 
Now \emph{w.l.o.g.} we can assume $a_1 < b_1 < \ldots < a_k < b_k$. Then $G$ is clearly a $2k$-threshold with $r=w$ and $1\leq j \leq 2k$  $\theta_j=a_{(j+1)/2}$ if $j$ is odd or $\theta_j=b_{j/2}$ otherwise.  The proof follows straightforwardly by the construction.
\end{proof}

\noindent
Given a graph $G=(V,E)$, we say that $G'=(V', E')$ is an \emph{induced subgraph} of $G$ if $V' \subset V$ and $E'$ consists of all the edges in $E$ whose endpoints are both in $V'$. The following lemma can be directly deduced from Definition~\ref{def:star_number}.
\medskip
\begin{lemma}\label{lem:remove_vertices}
Given a graph $G$ it holds that $\gamma(G') \leq \gamma(G)$ for any induced subgraph $G'$ of $G$. 
\end{lemma}

\noindent
The next lemma follows trivially by the definition of star-$k$-PCG.
\medskip
\begin{lemma}\label{lem:twins}
Let $G$ be a star-$k$-PCG and let $G^w$ be a $k$-witness for $G$. For any two vertices $u,v$ in $G^w$ for which $N(u)-\{v\} \neq N(v) - \{u\}$  it holds $w(u)\neq w(v)$.
\end{lemma}

The following definition will be needed throughout the paper.
\medskip
\begin{definition}\label{def:kFP}
Let $G^w$ be a vertex weighted graph and $k$ an integer such that $k \geq 1$. Consider a sequence, $c_1, c_2, \ldots, c_{2k+1}$ of $2k+1$  pairs of vertices in  $G^w$. This sequence is said a $k$-FP if it satisfies the following conditions:
\begin{enumerate}
    \item A pair $c_i$ is an edge in $G^w$ (i.e., $c_i \in E$) if and only if $i$ is an odd integer.
    \item The weights of the pairs follow an ascending order: $w(c_1) \leq  w(c_2) \leq  \ldots \leq  w(c_{2k+1})$.
\end{enumerate}
\end{definition}

\noindent
See Figure~\ref{fig:image2} for an example of a $3$-FP.  The next lemma shows that the $k$-FP is a forbidden pattern (hence the name) for any graph that is  a star-$k$-PCG.
\medskip
\begin{lemma} \label{lem:kFP}
Let $G$ be a star-$k$-PCG than any  $k$-witness  $G^w$ of $G$  does not contain a $k$-FP.
\end{lemma}

\begin{proof}
Firstly, observe that an equality within the sequence of inequalities in item 2 of Definition~\ref{def:kFP} implies the existence of an edge and a non-edge with identical weight which  contradicts the definition of a star-$k$-PCG. So we have $c_i \neq c_j$ for all $i\neq j$. The proof follows by noticing that if $G$ contains a $k$-FP then no two edges $c_i$ and $c_j$ can belong to the same interval. Given that there are exactly $k+1$ edges  in the $k$-FP sequence, then at least $k+1$ distinct intervals are required.
\end{proof}

\noindent
We introduce now the concept of $k$-FSeq.

\medskip
\begin{definition}\label{def:kPath}
Let $G^w$ be a vertex weighted graph and $k$ an integer such that $k \geq 1$. Consider a sequence of $k+2$ vertices in  $G^w$, denoted as $u_1,u_2,\ldots u_{k+2}$. This sequence is said a \emph{$k$-FSeq} if it satisfies the following conditions:
\begin{enumerate}
\item  $u_iu_{i+1} \in E$ for  $1\leq i\leq k+1$ 
\item  $u_iu_{i+2}\not\in E$ for  $1\leq i \leq k$ 
\item $w(u_1)<w(u_2)<\ldots <w(u_{k+2})$ 
\end{enumerate}
\end{definition}

\noindent
See Figure~\ref{fig:image1} for an example of a $3$-FSeq. The following lemma holds.

\begin{lemma} \label{lem:Fseq}
Let $G$ be a star-$k$-PCG than any  $k$-witness  $G^w$ of $G$  does not contain a $k$-FSeq.
\end{lemma}

\begin{proof} Notice that if $G^w$ contains a $k$-FSeq
then the edges  $$u_1 u_2, u_1 u_3, u_2 u_3, u_2 u_4, \ldots , u_{k} u_{k+1}, u_{k} u_{k+2}, u_{k+1} u_{k+2}$$ form a $k$-FP (see  Fig.~\ref{fig:image1} and Fig.~\ref{fig:image2} for an example). The proof follows trivially by Lemma~\ref{lem:kFP}.
\end{proof}

Note that from the proof of Lemma~\ref{lem:Fseq}, we can infer that a $k$-FSeq implies the existence of a $k$-FP. However, the converse is not always true. This is shown in Fig.~\ref{fig:image3}, where we present a $3$-FP edge sequence that does not lead to the direct identification of a $3$-FSeq.

\begin{figure}
\centering
    \begin{subfigure}[t]{0.35\textwidth}
    \centering
    \includegraphics[width=\linewidth]{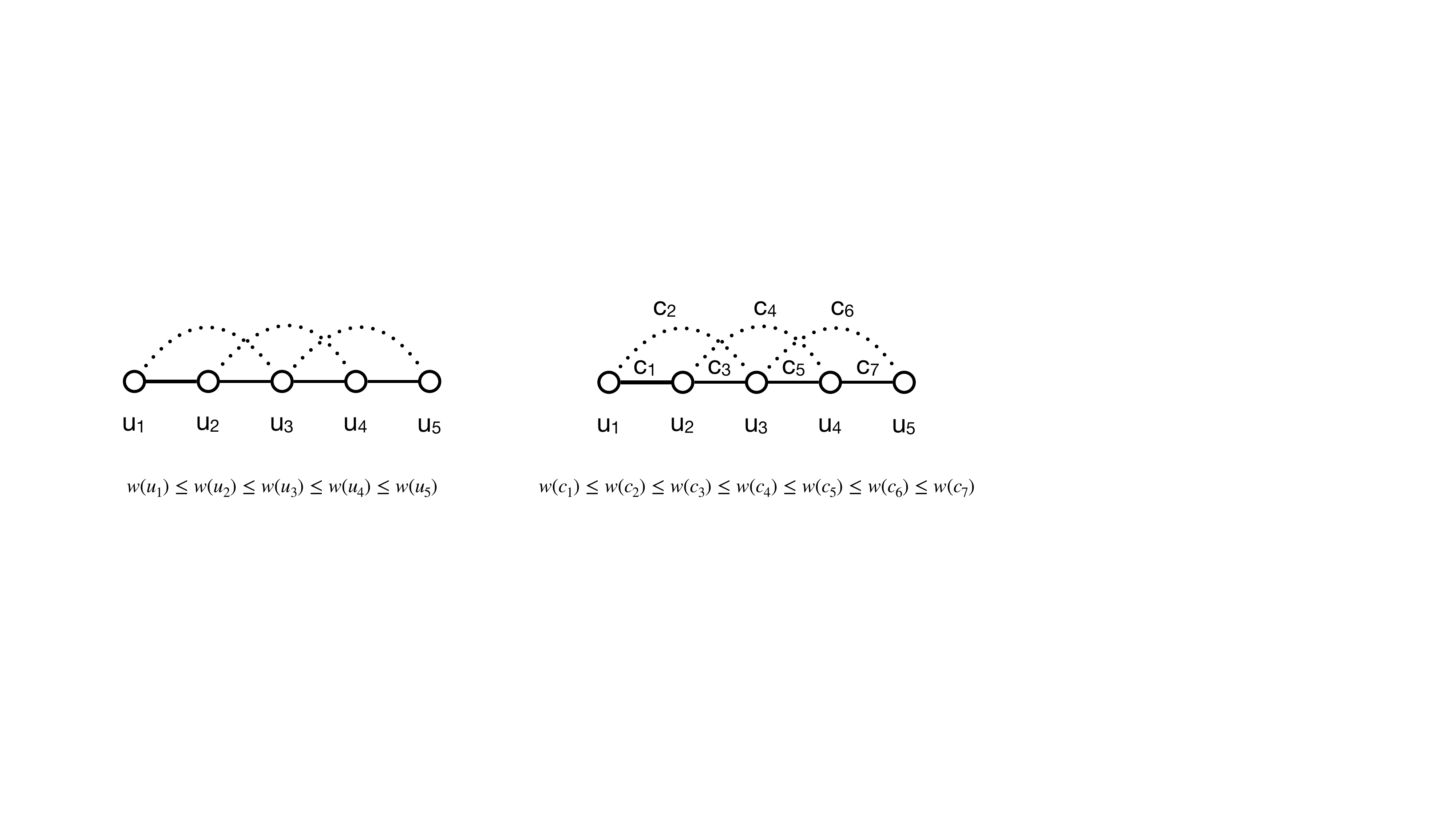}
    \caption{\label{fig:image1}}
    \end{subfigure}
    \vspace{0.1cm}
    \begin{subfigure}[t]{0.44\textwidth}
    \centering
    \includegraphics[width=\linewidth]{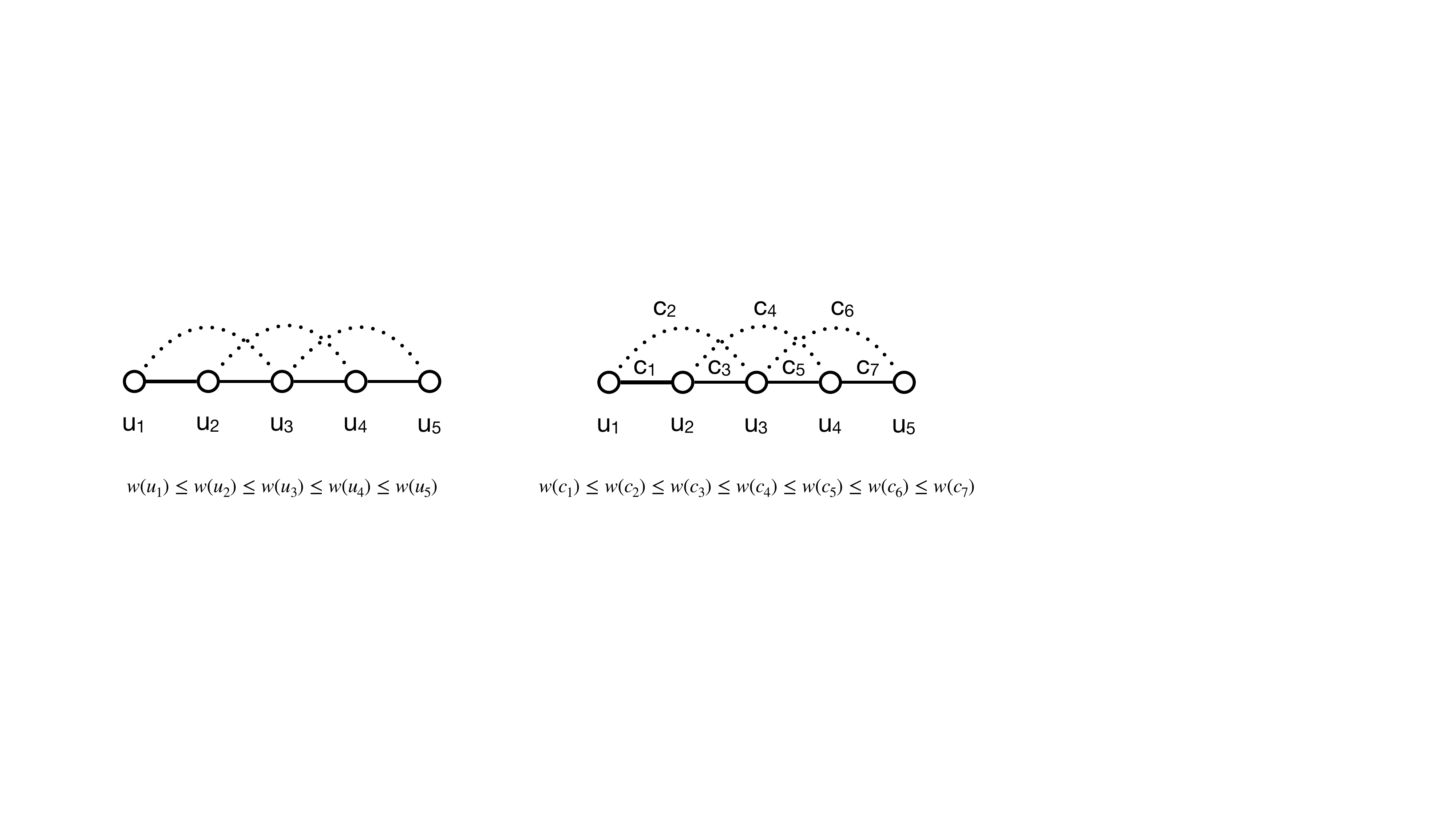}
    \caption{\label{fig:image2}}
    \end{subfigure}
    \begin{subfigure}[t]{0.4\textwidth}
    \centering
    \includegraphics[width=\linewidth]{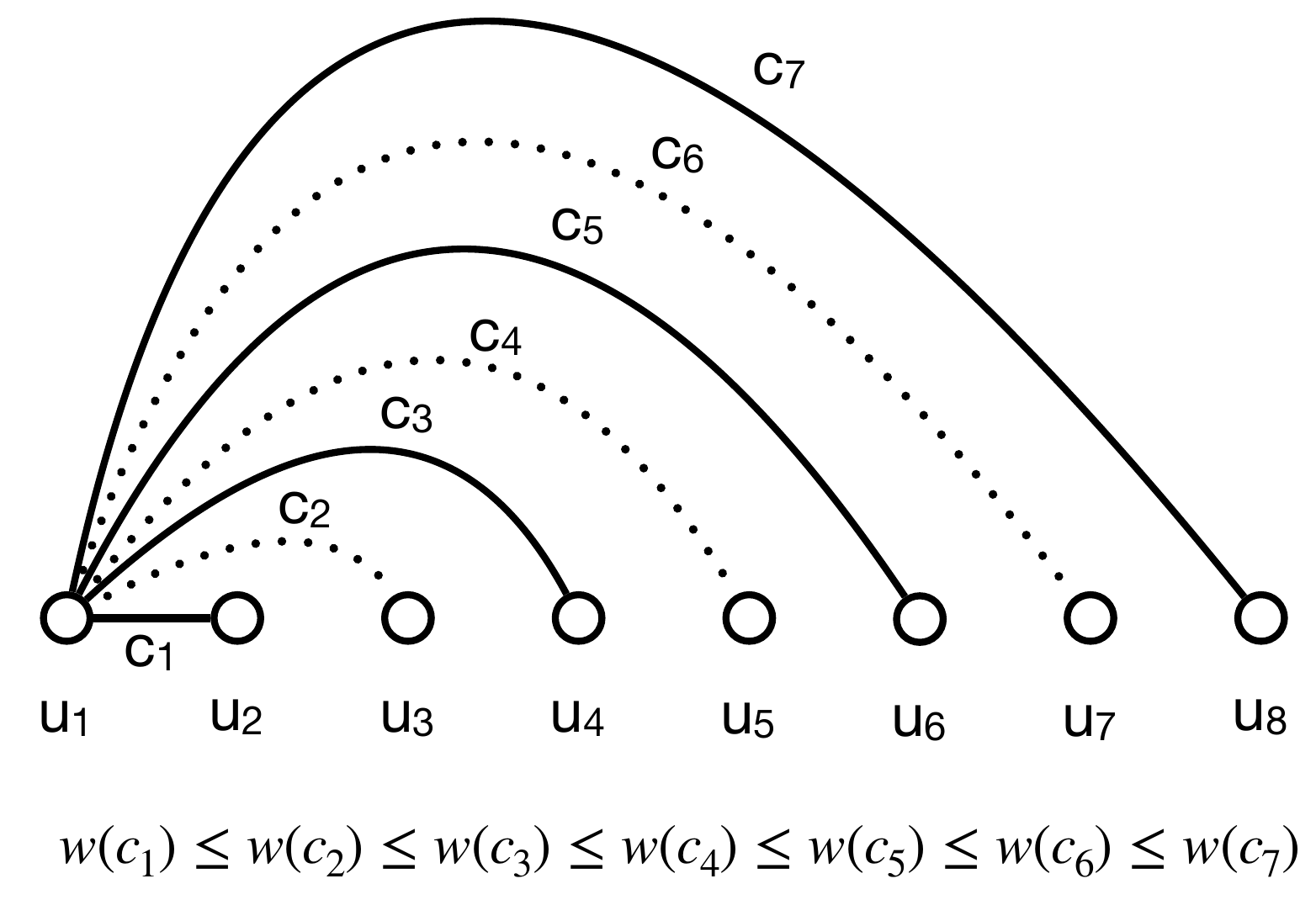}
    \caption{\label{fig:image3}}
    \end{subfigure}
\caption{(\subref{fig:image1}) An example of a $3$-FSeq  and (\subref{fig:image2}) the corresponding $3$-FP  sequence. (\subref{fig:image3}) Another example of a $3$-FP  sequence which does not lead to the identification of a $3$-FSeq. The dashed lines depict edges that are not in $G$, whereas the solid lines illustrate edges that are included in $G$.}\label{fig:Fseq_FP}
\end{figure}

\section{The star number of $n$-vertex graphs with $n \leq 7$}\label{sec:smallgraphs}

\subsection{$n$-vertex graphs with $n \leq 5$}
There are 34 non isomorphic graphs with 5 vertices \cite{oeis_nr}. Let $\mathcal{G}_5=\{G_1, G_2, \ldots, G_{34}\}$ be the set of these graphs. These graphs are depicted in Fig.~\ref{fig:all_graphs_5} based on increasing number of edges.  In this section, we prove that all graphs with at most 5 vertices have a star number equal to 1, with the exception of $G_{15}$, $G_{20}$, $G_{25}$, and $G_{27}$, which have a star number equal to 2. We start by proving the following lemmas.

\medskip
\begin{lemma}\label{lem:consecutive_neighb}
Let $G=(V,E)$ be a star-$1$-PCG and let $u, x \in V$  such that $x \in N(u) $, and for every other vertex $y \in N(u) $, it holds that $x y \not\in E $. For any $1$-witness graph $G^w$ of $G$, the neighborhood $N(u) $ is consecutive with respect to $\sigma(G^w)$.
\end{lemma}
\begin{proof}
Let $G^w$ be a $1$-witness graph of $G$ and let $u \in V$ with $|N(u)| =t$ and let $y_1, y_2,\ldots y_t$ be the ordering of $N(u)$ according to the weight function $w$ (see Fig.~\ref{fig:consecutive_neigh}). Assume now that $N(u)$ is not consecutive in $\sigma(G^w)$ and thus there exists a vertex $z \in V-N(u) $ for which $w(y_1)\leq w(z)\leq w(y_t)$. 
Notice that if $z=u$ we have the $1$-FSeq sequence $x, u, y_t$ (if $w(x) < w(u)$) or $y_1, u, x$ (if $w(x) > w(u)$) and from Lemma~\ref{lem:Fseq} this contradicts the fact $G$ is a star-$1$-PCG. Hence, assume $z\neq u$ and notice that clearly, $z\neq x$. There are two cases to consider: either $w(x) \leq w(z)$ or $w(x) \geq w(z)$. We consider only the first case as the second one follows using similar arguments. Thus, we have $w(x)\leq w(z)\leq w(y_t)$.  Now, there are two cases to consider: if $w(u) \leq w(x)$ we have $w(u) \leq w(x)\leq w(z)\leq w(y_t)$, otherwise $w(u) \geq w(y_t)$ and we have $w(x) \leq w(z)\leq w(y_t)\leq w(u)$. In both cases, $u x, uz, uy_t$ will be a $1$-FP and we reach a contradiction by Lemma~\ref{lem:kFP}.
\end{proof}

\begin{figure}
    \centering
    \includegraphics[scale=0.25]{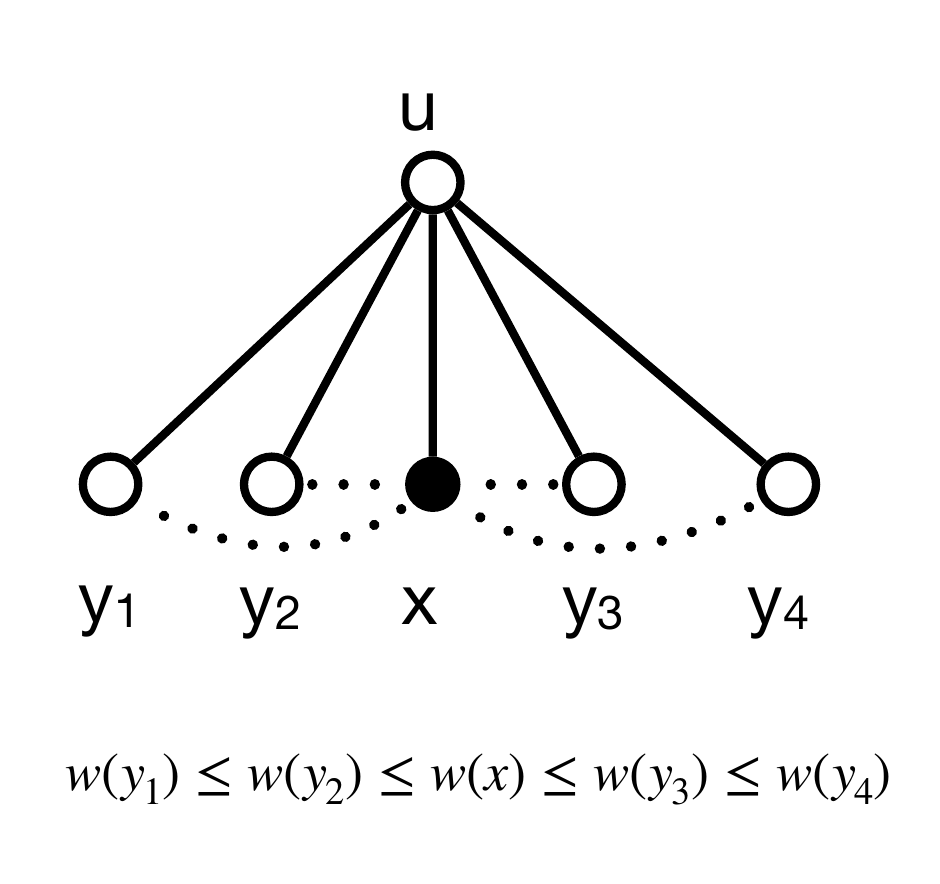}
    \caption{An illustration of Lemma~\ref{lem:consecutive_neighb} for a graph $G$. The dashed lines depict edges that are not in $G$, whereas the solid lines illustrate edges that are included in $G$. The vertices $\{y_1,y_2,x,y_3,y_4\}$ appear consecutive in $\sigma(G^w)$.}
    \label{fig:consecutive_neigh}
\end{figure}

\begin{lemma}\label{lem:g15}
$\gamma(G_{15}) \geq 2$. 
\end{lemma}
\begin{proof}
Suppose on the contrary that there exists $G^w$ which is a $1$-witness of $G_{15}$.  Let $a,b,c,d,e$ be the sequence of the vertices in the cycle (as in Fig.~\ref{fig:all_graphs_5}).  \emph{W.l.o.g.} let $a$ be the vertex of minimum weight in $G^w$ (the same argument follows for any other vertex by symmetry).  From Lemma~\ref{lem:consecutive_neighb} we have that each of the neighborhoods $N(b)=\{a,c\}$ and $N(e)=\{d,a\}$ must be consecutive in  $\sigma(G^w)$. However, there is no possible weight function $w$ for which $w(a)$ is the smallest and $a$ appears consecutive with both $c$ and $d$. Thus, we reach a contradiction and there exists no $G^w$ which is a $1$-witness of $G_{15}$.  
\end{proof}

\begin{lemma}\label{lem:g20}
$\gamma(G_{20}) \geq 2$. 
\end{lemma}
\begin{proof}
Let $a,b,c,d,e$ be the vertices of $G_{20}$ as in Fig.~\ref{fig:all_graphs_5}. Suppose on the contrary that there exists $G^w$ which is a $1$-witness of $G_{20}$. \emph{W.l.o.g.} we can assume  $w(b)<w(e)$. From Lemma~\ref{lem:consecutive_neighb} we have that each of the neighborhoods $N(b)=\{a,c,e\}$ and $N(e)=\{a,b,d\}$ must be consecutive in $\sigma(G^w)$. Thus, the total order $\sigma(G^w)$ must contain the partial order $\{b,d\},\{a\}, \{c,e\}$. We distinguish two cases:

\begin{itemize}
\item $w(b)<w(d)$. In this case we have only two possibilities: either $w(b) \leq w(d) \leq w(a) \leq w(c) \leq w(e)$ or  $w(b) \leq w(d) \leq w(a) \leq w(e) \leq w(c)$. In both cases we have that $b a, da, ae$ is a $1$-FP contradicting our initial hypothesis that $G^w$ is a $1$-witness.

\item $w(d)<w(b)$. In this case we have only two possibilities: either $w(d) \leq w(b) \leq w(a) \leq w(c) \leq w(e)$ or  $w(d) \leq w(b) \leq w(a) \leq w(e) \leq w(c)$. In the first case we have that  $bc, ac, ae$ is a $1$-FP and in the second case, $de, dc, bc$ is a $1$-FP. Hence, in both cases we contradict our initial hypothesis that $G^w$ is a $1$-witness.
\end{itemize}
\end{proof}

\begin{lemma}\label{lem:g25-g27}
$\gamma(G_{25}) \geq 2$ and  $\gamma(G_{27}) \geq 2$. 
\end{lemma}
\begin{proof}
Let $a,b,c,d,e$ be the vertices of $G_{25}$ as in Fig.~\ref{fig:all_graphs_5}. Suppose on the contrary that there exists $G^w$ which is a $1$-witness of $G_{25}$.   The next observation follows by the fact that $G_{25}-\{a\}$ does  contain neither a clique of three vertices nor an independent set of size three. 

\begin{observation}\label{obs:three_vertices}
Any set of three vertices in $G_{25}$ that does not contain $a$ will always include at least one edge and one non-edge of the graph $G_{25}$.
\end{observation}

Consider now the position of $a$ in $\sigma(G^w)$. We consider three cases:

\begin{itemize}
    \item $\{a\}, \{b,c,d,e\}$ is contained in $\sigma(G^w)$. Let $\sigma(G^w)=a,x,y,z,t$. By Observation~\ref{obs:three_vertices} we have that there is at least one non-edge among $xy$, $xz$, $yz$. \emph{W.l.o.g.} let $xz \not \in E(G_{25})$. Then, consider  $x,z,t$ and again by Observation~\ref{obs:three_vertices} there is at least one edge among $xt$ and $zt$. We reach a contradiction as if $xt \in E(G_{25})$, then $ax,xz,xt$ is an $1$-FP and if $zt \in E(G_{25})$, then $ax,xz,zt$ is an $1$-FP.

    \item $\{b,c,d,e\}, \{a\}$ is contained in $\sigma(G^w)$. Using similar argument as the previous item, let $\sigma(G^w)=x,y,z,t,a$. By Observation~\ref{obs:three_vertices} we have that there is at least one edge among $xy$, $xz$, $yz$. \emph{W.l.o.g.} let $xz \in E(G_{25})$. Then, consider  $x,z,t$ and again by Observation~\ref{obs:three_vertices} there is at least one non-edge among $xt$ and $zt$. We reach a contradiction as if $xt \in E(G_{25})$, then $xz,xt, ta$ is an $1$-FP and if $zt \in E(G_{25})$, then $xz,zt, ta$ is an $1$-FP.

\item $S_1, \{a\}, S_2$, where $S_1, S_2$ form a partition of $\{b,c,d,e\}$, is contained in $\sigma(G^w)$. We show that there exists $x \in S_1$ and $y \in S_2$ such that $xy \not \in E(G_{25})$. This concludes the proof as $x,a,y$ is an $1$-FSeq. \emph{W.l.o.g.} we may assume that $1 \leq |S_1| \leq 2$. If $S_1=\{x\}$ then for every vertex different from $a$ there is at least one vertex $y$ non adjacent  to it that necessarily belongs to $S_2$ and thus we have found our  $xy \not \in E(G_{25})$.  Otherwise, $S_1=\{x,z\}$ and $S_2=\{s,t\}$. Notice that one among $xs,xt,zs$ and $zt$ must be a non-edge as the graph induced by $G_{25}-\{a\}$ has strictly less then $4$ edges. Thus, again we found our pair $xy \not \in E(G_{25})$.
\end{itemize}

Finally, notice that in the previous proof, we did not consider whether there is an edge between $c$ and $d$ or not. Therefore, the argument for the graph $G_{27}$ follows identically as for the graph $G_{25}$.

\end{proof}

\begin{figure}
\includegraphics[scale=0.35]{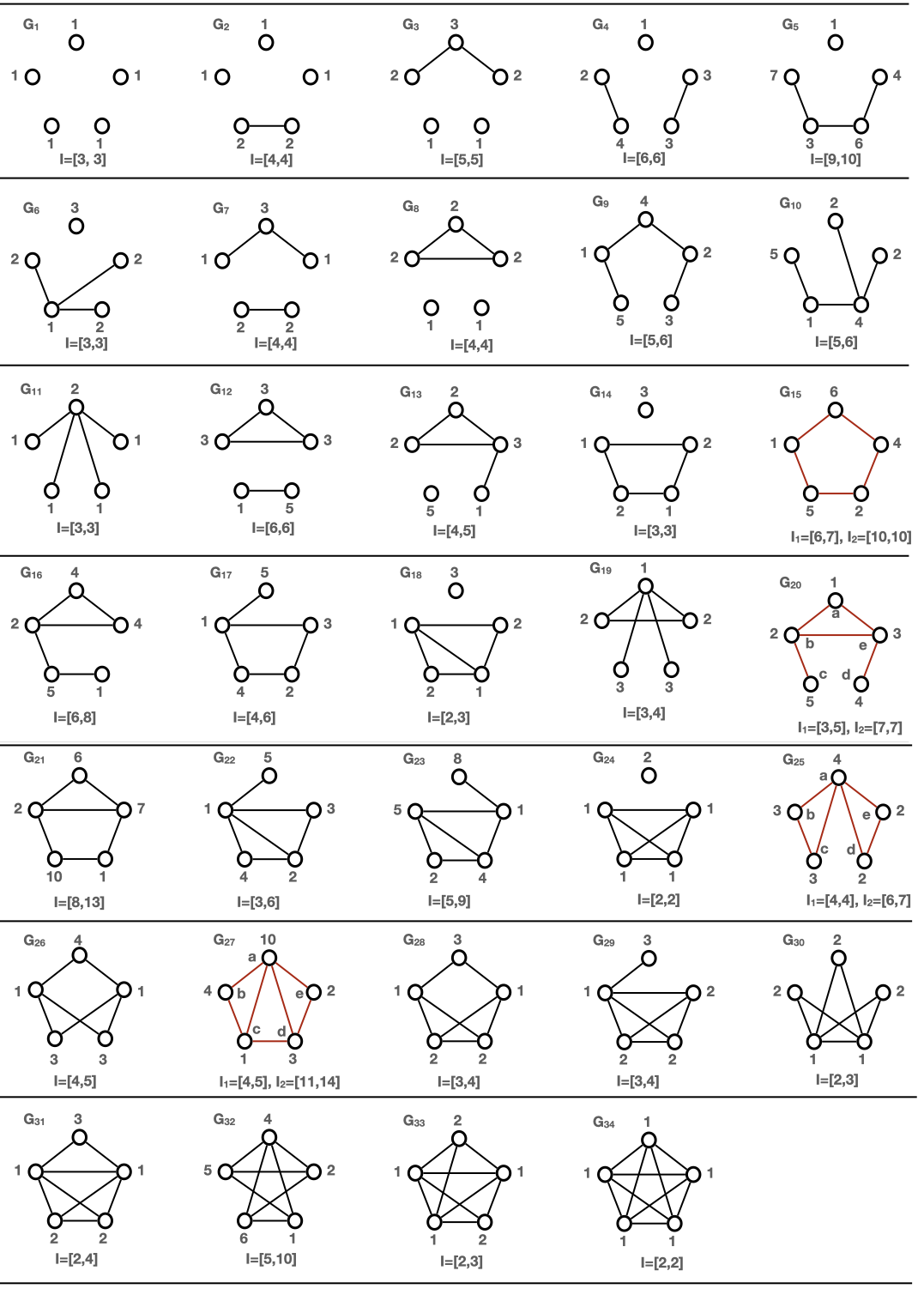}
\caption{The list for all non isomorphic graphs with at most $5$ vertices. The graphs $G_{15}, G_{20}, G_{25}, G_{27}$  are star-$2$-PCGs. The rest of the graphs are all star-$1$-PCGs. } \label{fig:all_graphs_5}
\end{figure}

\begin{theorem}\label{theo:5nodes}
For all graphs with at most 5 vertices, the star number is 1, except for the four graphs $G_{15}, G_{20}, G_{25}$ and $ G_{27}$  which have a star number equal to 2. 
\end{theorem}
\begin{proof}
From Lemma~\ref{lem:g15}, Lemma~\ref{lem:g20} and Lemma~\ref{lem:g25-g27} we conclude that $\gamma(G_i)\geq 2$ for $i \in \{15, 20, 25, 27\}$. The witness graph and the corresponding interval depicted in Fig.~\ref{fig:all_graphs_5} demonstrate that indeed $\gamma(G_i) = 2$ for these graphs. Next, it is straightforward to verify that the remaining graphs in Fig.~\ref{fig:all_graphs_5} are star-1-PCGs by examining the witness graph along with the associated interval.  This proves the claim for $n=5$.

Now, let $G'$ be a graph of at most $4$ vertices. We can obtain a graph $G$ of exactly 5 vertices, by adding isolated vertices to $G'$.  Thus, any graph of at most $4$ vertices can be viewed as an induced subgraph of the following graphs depicted in Fig.~\ref{fig:all_graphs_5}: $G_{1}, G_2 \ldots, G_{8}$, $G_{13},G_{14}, G_{18}, G_{24}$. These graphs are shown to be star-$1$-PCGs. Thus, for $n\leq 4$ the claim follows from Lemma~\ref{lem:remove_vertices}.
\end{proof}

\subsection{$n$-vertex graphs with $n=6$ or $n=7$}\label{subsec:6and7}
The number of non-isomorphic graphs with 6 and 7 vertices  is 156 and 1044, respectively  \cite{oeis_nr}, which is significantly greater then that of graphs on 5 vertices. Hence, to determine the star number of these graphs we introduce two different algorithms that enable automatic verification. First we introduce a straightforward LP program, \emph{Algorithm 1}, that takes as input a graph $G$, an integer $k$ denoting the number of intervals required and an integer $M$ denoting the maximum possible weight on the vertices of $G$. The program checks if there is a $k$-witness $G^w$ where the highest vertex weight is at most $M$.  We generated all the non-isomorphic graphs with at most 7 vertices and checked if there is a $k$-witness for $k\in \{1,2,3\}$. We set $M$ empirically to the value of 10 and 20 for $n=6$ and $n=7$, respectively.  For graphs with 6 vertices, \emph{Algorithm 1}, produced either a $1$-witness or $2$-witness graph leading to the next theorem.  
\medskip
\begin{theorem}\label{theo:6nodes}
For any graph $G$ with $6$ vertices, it holds that  $\gamma(G)\leq 2$ and  there exist graphs on $6$ vertices for which the equality holds.  
\end{theorem}
\begin{proof}
In  \cite{smallgraphspage}  we include the list of all the graphs with their respective constructions proving the membership in star-$1$-PCG or star-$2$-PCG.  
\end{proof}

We applied \emph{Algorithm 1} to all the graphs with 7 vertices. For all these graphs, except for the three shown in Figure~\ref{fig:graphs7}, namely $G_{536}$,$G_{662}$ and $G_{963}$\footnote{The index of the graphs refers to the index they have in the list in \cite{smallgraphspage}.}, the algorithm successfully generated either a $1$-witness or a $2$-witness graph. For the three graphs in Figure~\ref{fig:graphs7}, it managed to construct a $3$-witness as shown in \cite{smallgraphspage} and in the caption of the figure. The proofs that the star number for these graphs is $3$ is a case by case analysis that shows that for any graph $G_i$ with $i \in \{536, 662, 963\}$, and any weight function $w$, the graph $G_i^w$ contains a $2$-FP. These proofs rely solely on the ordering of the vertices based on their weight and not on the actual weight values. The proofs are quite lengthy and technical and thus we use automatic verification to ensure that all cases were thoroughly covered. To this purpose we develop an algorithm, \emph{Algorithm 2}, that given a graph $G$ and an integer $k$, checks if there exists an ordering $\pi$ of the vertices for which no $k$-FP appears. Notice that from Lemma~\ref{lem:kFP}, if for all possible orderings $\pi$ an $k$-FP appears, then $\gamma(G) > k$. 
Notice that all three graphs in Figure~\ref{fig:graphs7} do not contain two vertices $u,v$ for which $N(u) -\{v\}=N(v)-\{u\}$ and hence by  Lemma~\ref{lem:twins}, in any $k$-witness graph $G^w$, there are no two vertices with the same weight. Thus, we can only focus strict total orders of the vertices. For each one of the graphs in Figure~\ref{fig:graphs7} and for all possible strict total orders $\pi$, \emph{Algorithm 2} found always a $2$-FP leading to the following result. 

\begin{figure}
\centering
    \begin{subfigure}[b]{0.26\textwidth}
    \centering
    \includegraphics[width=\linewidth]{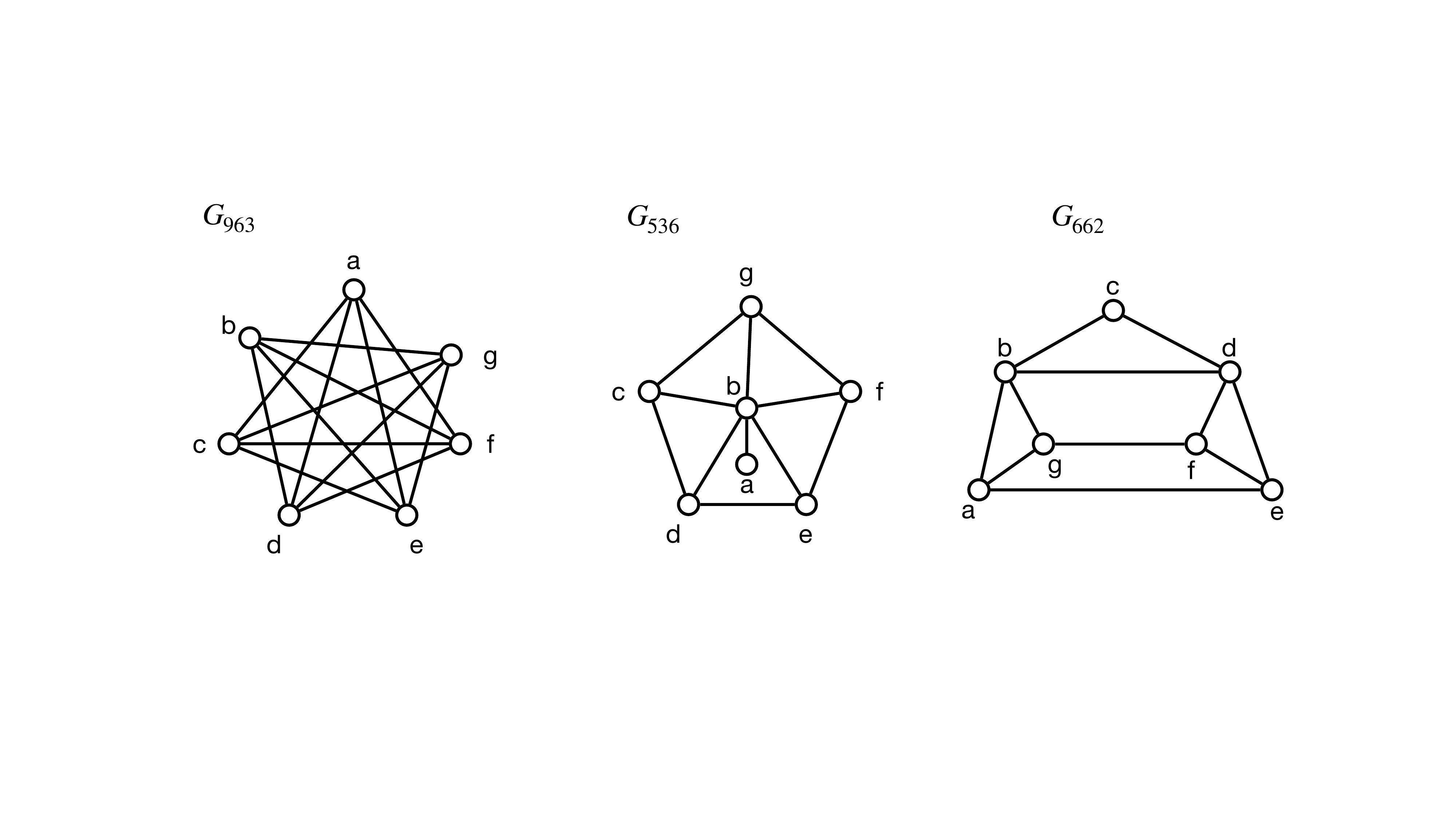}
    \caption{\label{fig:g1}}
    \end{subfigure}
    \begin{subfigure}[b]{0.3\textwidth}
    \centering
    \includegraphics[width=\linewidth]{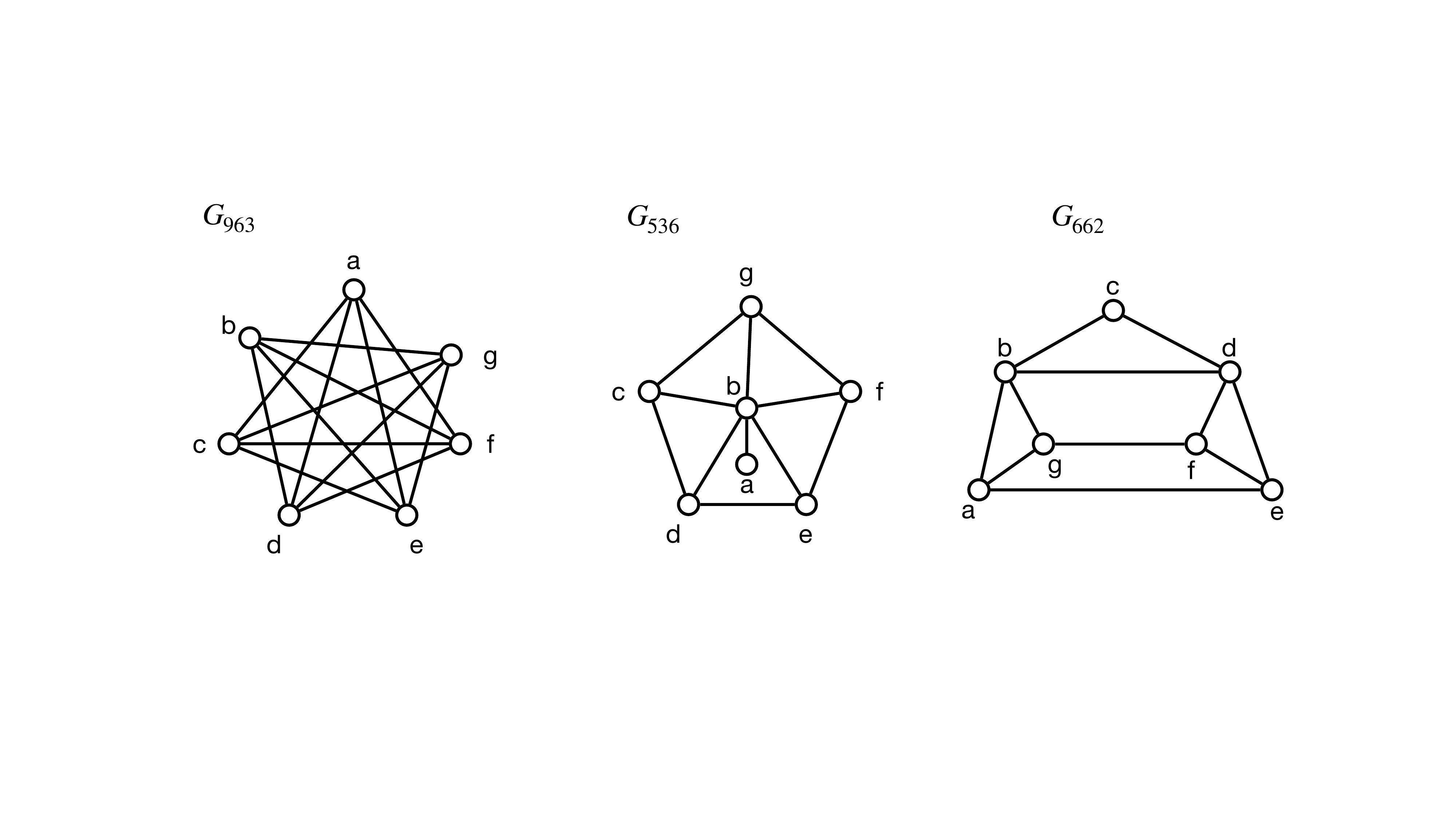}
    \caption{\label{fig:g2}}
    \end{subfigure}
    \begin{subfigure}[b]{0.28\textwidth}
    \centering
    \includegraphics[width=\linewidth]{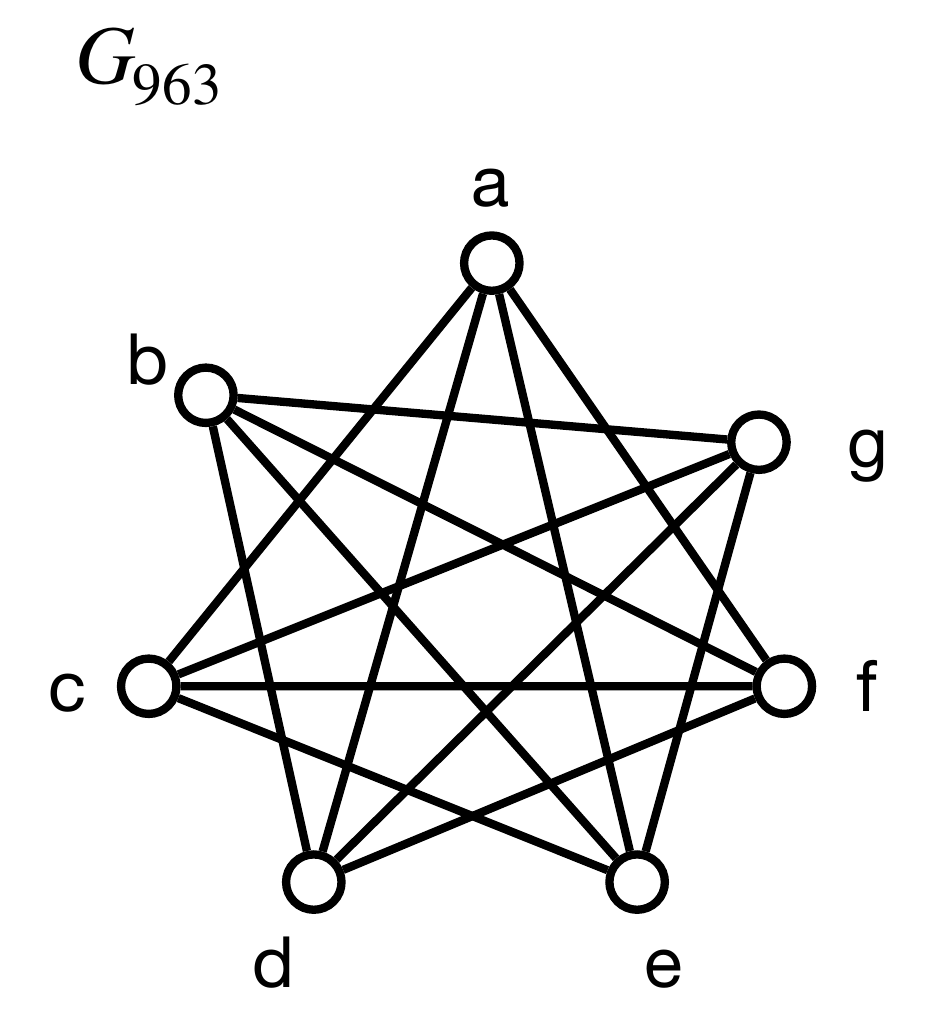}
    \caption{\label{fig:g3}}
    \end{subfigure}
\caption{The only three graphs on 7 vertices that are not star-$2$-PCG.  (\subref{fig:g1}) $\gamma(G_{536})=3$ by setting $w(a)=7, w(b)=1, w(c)=6, w(d)=4, w(e)=5, w(f)=9, w(g)=8$ and $I_1=[4,10], I_2=[14.14], I_3=[17,17]$. (\subref{fig:g2}) $\gamma(G_{662})=3$ by setting $w(a)=5, w(b)=3, w(c)=4, w(d)=1, w(e)=7, w(f)=6, w(g)=2$ and $I_1=[4,5], I_2=[7,8], I_3=[12,13]$. (\subref{fig:g3}) $\gamma(G_{963})=3$ by setting $w(a)=5, w(b)=4, w(c)=7, w(d)=2, w(e)=1, w(f)=3, w(g)=4$ and $I_1=[5,8], I_2=[10,10], I_3=[12,13]$.} \label{fig:graphs7}
\end{figure}

\medskip
\begin{theorem}\label{theo:7nodes}
For all graphs with $7$ vertices, the star number is at most $2$, except for the three graphs depicted in Figure~\ref{fig:graphs7}  which have a star number equal to 3.  
\end{theorem}

\section{The star number of caterpillars}\label{sec:caterpillar}
Here we prove that caterpillars are star-$1$-PCGs. Notice that in \cite{Jamison20} it is shown that  caterpillars are $2$-threshold and by Observation~\ref{prop:2k-threshold} they are also star-$1$-PCGs. 
However, we provide a new construction within the framework of star-$k$-PCGs, providing a different approach to understanding caterpillars in this context. 

\medskip
\begin{theorem}
 For any caterpillar $T$, it holds $\gamma(T)=1$.
\end{theorem}
\begin{proof}
We consider first a caterpillar $T=(V,E)$ where every vertex of the central path is adjacent to \emph{exactly one} leaf. We denote by $n$ the number of vertices in the central path and enumerate its vertices $a_0, A_0, a_1, A_1$ alternatively as in Figure~\ref{fig:cat1}). Notice that we can assume $n \geq 3$ as for any $n<3$ it is easy to see that a caterpillar is a star-1-PCG \cite{smallgraphspage}.

For each $0\leq i <n$ we define the weight of $a_i$ and $A_i$  as follows:
\begin{equation}
w(a_i) = 
    \begin{cases}
    \displaystyle
        (i+1)n+1, & \text{if } i \text{ is even}\\ \notag
     (i+1)n+1+i, & \text{if } i\text{ is odd}
    \end{cases}
\end{equation}

\begin{equation}
w(A_i) = 
    \begin{cases}
    \displaystyle
        n^3-in-i, & \text{if } i \text{ is even}\\ \notag
    n^3- in, & \text{if } i\text{ is odd}
    \end{cases}
\end{equation}

\noindent
We define the interval: 
$$    I=[n^3+1,n^3+2n+1]
$$
\noindent
See Fig.~\ref{fig:cat2} for a construction of a $1$-witness graph for caterpillar with $n=6$.  

Notice that the only edges are of type $a_i A_i$ and $a_{2r} A_{2r+1}$ and $A_{2r+1} a_{2r+2}$ for some integer $r\geq 0$. 
Consider two vertices $x,y$ in the caterpillar. We need to consider the following cases:

\paragraph{Case I. }  $x=a_i$ and $y=a_j$.  Notice that in this case $a_ia_j \not\in E$. Indeed, $w(a_i)+w(a_j) < 2\max_r{w(a_r)} =2 w(a_{n-1}) \leq 2n^2+2n \leq  n^3$ as $n\geq 3$. Thus, $w(a_i)+w(a_j) \not\in I$. 

\paragraph{Case II. }  $x=A_i$ and $y=A_j$. Notice that in this case $A_i A_j \not\in E$. Indeed, $w(A_i)+w(A_j) > 2\min_r{w(A_r)} =2 w(A_{0}) =  2n^3 > n^3 + 2n+1$ as $n\geq 3$. Thus, $w(A_i)+w(A_j) \not\in I$. 
    
 \paragraph{Case III. }  $x=a_i$ and $y=A_j$. We have to consider the following cases:

\begin{itemize}

\item $i$ and $j$ are both odd: Notice that in this case $a_i A_j \in E$ if and only if $i=j$.  Indeed, consider 
$w(a_i)+w(A_j)=n^3+(i-j+1)n+i+1$. 

If $i=j$ then $w(a_i)+w(A_j)= n^3+n+i+1 \in I$ since $0\leq i < n$. 

If $i<j$ then $i-j\leq -2$ and thus $w(a_i)+w(A_j)\leq n^3-n+i+1 \leq  n^3 \not \in I$. 

If $i>j$ then $i-j\geq 2$ and thus $w(a_i)+w(A_j) \geq n^3 +3n +i+1 \not \in I$.

 \item $i$ and $j$ are both even: Notice that in this case $a_i A_j \in E$ if and only if $i=j$.  Indeed, consider 
$w(a_i)+w(A_j)= n^3+(i-j+1)n-j+1$. 

If $i=j$ then $w(a_i)+w(A_j)= n^3+n-j+1 \in I$ since $0\leq j < n$. 

If $i<j$ then $i-j\leq -2$ and thus $w(a_i)+w(A_j)\leq n^3-n-j+1 \leq  n^3 \not \in I$. 

If $i>j$ then $i-j\geq 2$ and thus $w(a_i)+w(A_j) \geq n^3 +3n -j+1 \geq n^3 +2n+2 \not \in I$ (where the last inequality follows as $j\leq n-1$).

\item $i$ odd and $j$ even: Notice that in this case $a_i A_j \not\in E$. Indeed, $w(a_i)+w(A_j)= 1+i+(1+i)n+ n^3-j(n+1)=n^3+(i-j+1)(n+1)$.   

If $i>j$ then $i-j \geq 1$ and we have $w(a_i)+w(A_j) \geq n^3+2(n+1) \geq  n^3+2n +2 \not \in I$. 

If $i<j$ then $i-j\leq -1$ and we have  $w(a_i)+w(A_j)\leq n^3 \not \in I$.

\item $i$ even and $j$ odd: Notice that in this case $a_i A_j \in E$ if and only if $|i-j|=1$.  Indeed, consider 
$w(a_i)+w(A_j)=n^3+(i-j+1)n+1$.

If $|i-j|=1$ we have that either $w(a_i)+w(A_j)= n^3+2n+1 \in I$ (when $i=j+1$ ) or $w(a_i)+w(A_j)=n^3+1 \in I$ (when $j=i+1$).

If $|i-j|\geq 2$ then either $w(a_i)+w(A_j) \geq  n^3 + 3n +1 \not \in I$ (when $i-j\geq 2$) or $ w(a_i)+w(A_j) \leq  n^3 -n+1 \not \in I$ (when $i-j\leq -2$). 
   
\end{itemize}

\begin{figure}
\centering
    \begin{subfigure}[t]{0.42\textwidth}
    \centering
    \includegraphics[width=\linewidth]{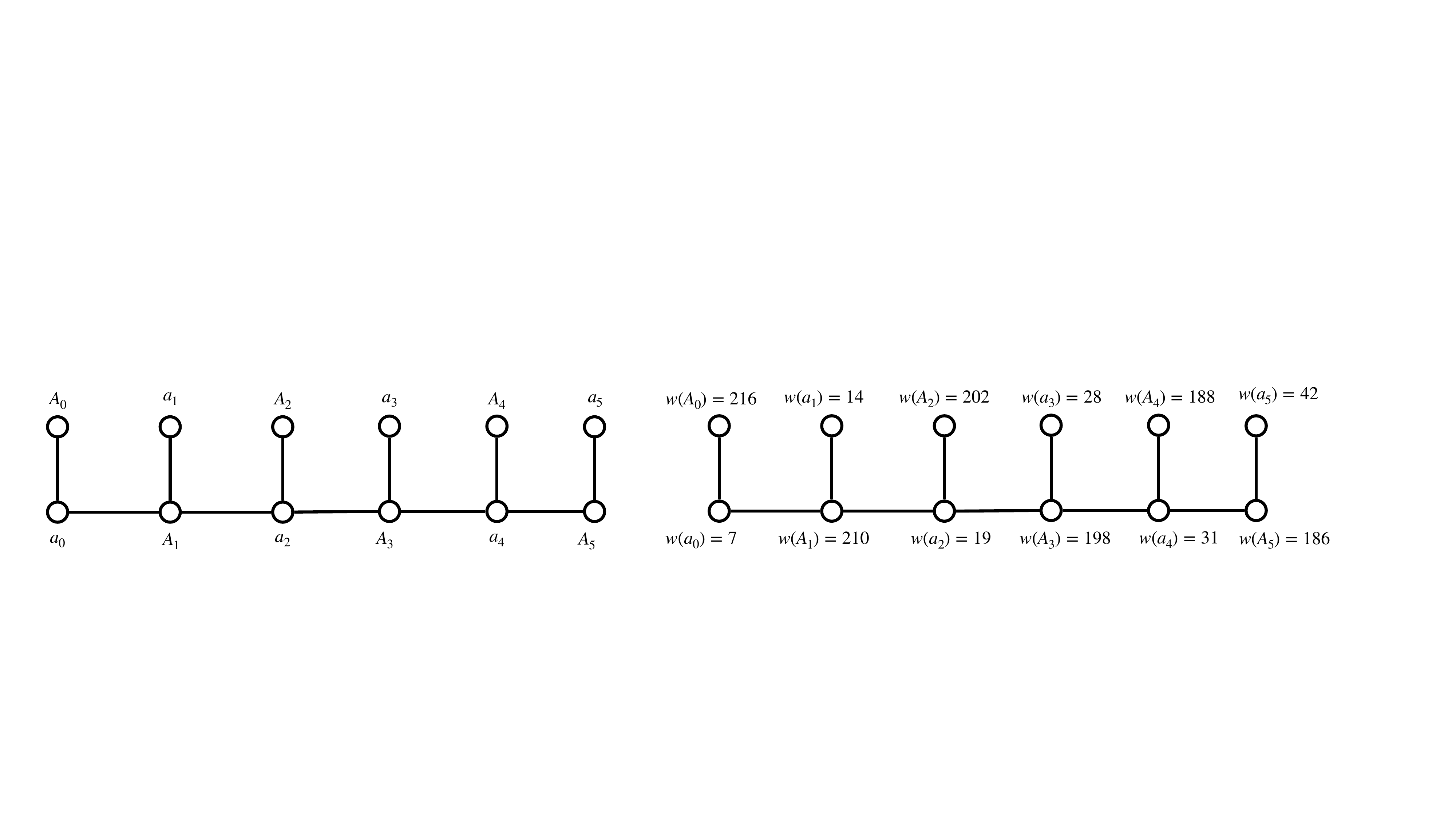}
    \caption{\label{fig:cat1}}
    \end{subfigure}
    \qquad 
    \begin{subfigure}[t]{0.48\textwidth}
    \centering
    \includegraphics[width=\linewidth]{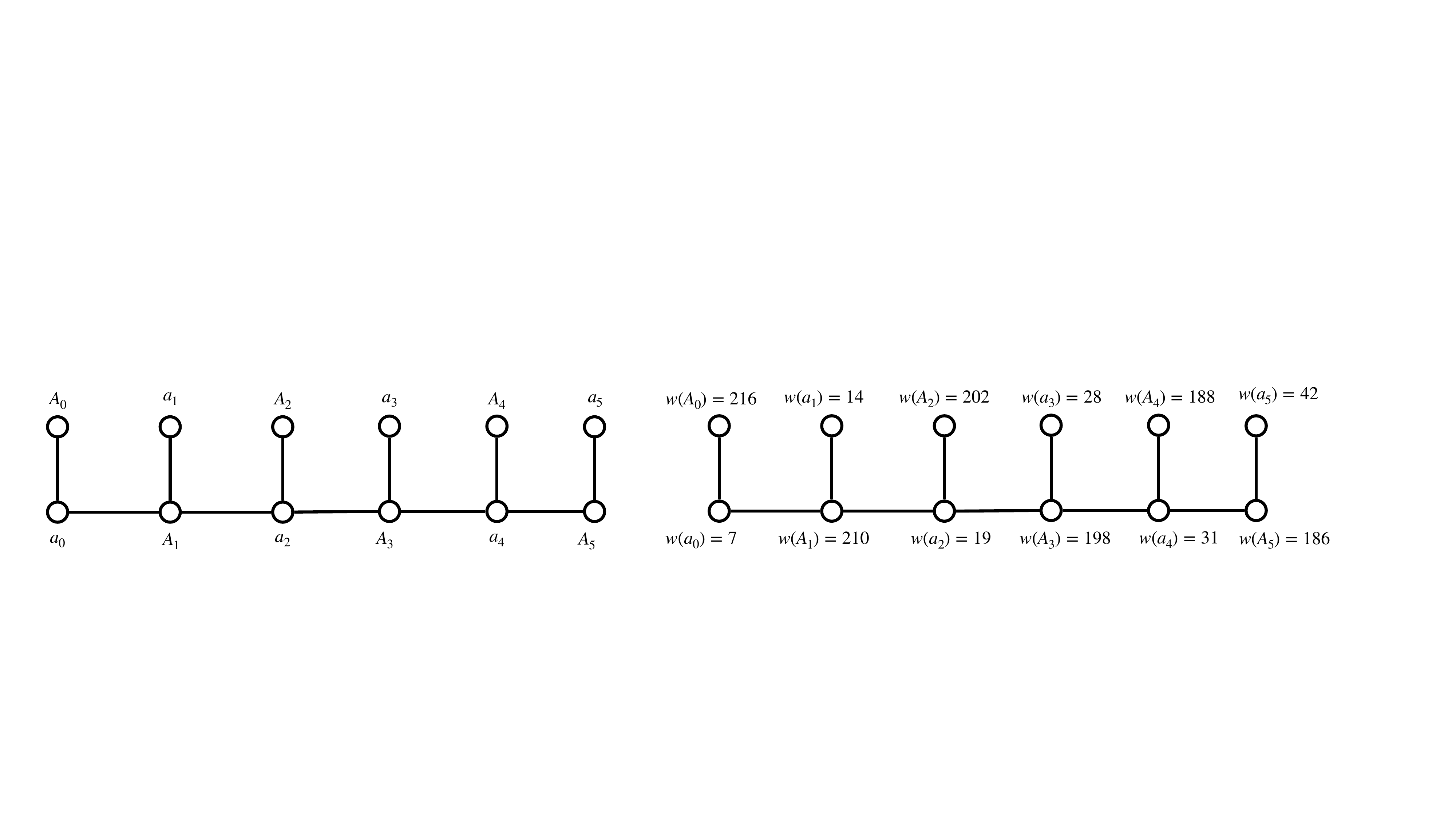}
    \caption{\label{fig:cat2}}
    \end{subfigure}
\caption{(\subref{fig:cat1}) A caterpillar with $n=6$ and (\subref{fig:cat2}) the corresponding $1$-witness graph. The interval is  $I=[217,229]$}\label{fig:fig:cat_example}
\end{figure}

\noindent
It remains to show that the construction we provided generalizes to any caterpillar. Indeed if a vertex of the spine is connected to more than one leaf, we assign the same weight to all of its leaves. The construction is still valid as from Case I and Case II we have that for any $0\leq i <n$, $2w(a_i) \not \in I$ and $2w(A_i)\not \in I$. Finally, if a vertex on the spine is not connected to any leaf, then the original construction trivially holds.  
\end{proof}

\section{The star number of cycle graphs}\label{sec:cycle} 
We already showed that $\gamma(C_5)=2$, here we show that this holds for every cycle $C_n$ with $n\geq 5$.

\medskip
\begin{theorem}\label{theo:cycle}
For any $n\geq 5$ it holds $\gamma(C_n)=2$.
\end{theorem}
\begin{proof}
Note that the proof of Lemma \ref{lem:g20} can be extended to every cycle $ C_n $ with $ n \geq 5 $ and thus $\gamma(C_n) > 1$. We provide now a construction to show that cycle graphs  are star-$2$-PCGs. To this purpose we extend a construction for the $1$-witness of a path. 
The construction depends on the parity of $n$. Let $C_n=v_1,v_2,\ldots,v_n,v_1$ and to simplify the notation we set for every $1\leq i\leq n$, $w(v_i)=w_i$.

\paragraph{Case $n$ even. }  For every $1\leq i\leq n$, we define $w_i$ as follows: 
\begin{equation}
w_i = 
    \begin{cases}
    \displaystyle
       n+\frac{n}{2} -  \frac{i-1}{2}, & \text{if } \displaystyle i \text{ is odd}\\ \notag
    \displaystyle \frac{i}{2}, & \text{if } i \text{ is even}
    \end{cases}
\end{equation}
\noindent
We define two intervals: 
\begin{align*}
    I_1=\left[n+\frac{n}{2},n+\frac{n}{2}+1\right] \quad  I_2=[2n, 2n]
\end{align*}
\noindent
In Fig.~\ref{fig:c1} we depict an example of the $2$-witness graph for $C_8$.  Consider any two arbitrary vertices $v_i, v_j$, with $v_i\neq v_j$. There are three cases to consider: 
\begin{itemize}
    \item[(a)] Both $i$ and $j$ are odd.  In this case $w_i +  w_j=3n-\frac{i+j}{2}+1>2n+1$ (where the last inequality follows from $\frac{i+j}{2}<n$). Thus, $w_i +  w_j\not\in I_1$ and $w_i +  w_j\not\in I_2$.
    \item[(b)] Both $i$ and $j$ are even. In this case $w_i +  w_j=\frac{i+j}{2} <n$. Thus, $w_i +  w_j\not \in I_1$ and $w_i +  w_j\not\in I_2$.
    \item[(c)] $i$ and $j$ have different parity. \textit{W.l.o.g.}  let $i$ be odd and $j$ be even. In this case  $w_i +  w_j=n+\frac{n}{2}-\frac{i-j-1}{2}$.  Clearly, $w_i +  w_j \in I_1$ if and only  $\frac{i-j-1}{2}\in \{-1,0\}$.  If  $\frac{i-j-1}{2} = -1$ then $i=j-1$. Otherwise if  $\frac{i-j-1}{2}=0$ we have $i=j+1$. Thus $w_i +  w_j\in I_1$  if and only if  $|i-j|=1$ which corresponds to an edge in $C_n$, more precisely in $P_n=v_1, v_2, \ldots, v_n$. Moreover, $w_i +  w_j \in I_2$ if and only if $j-i=n-1$. The latter holds  only for $j=n$ and $i=1$ as $1\leq i,j\leq n$, which again corresponds to the edge $v_n v_1 $ in $C_n$.
\end{itemize}

 \noindent
Thus, from points (a)-(c) we conclude that there is an edge among $v_i$ and $v_j$ if and only if $v_iv_j$ is an edge in $C_n$.

\paragraph{Case $n$ odd. } For every $1\leq i\leq n$ we define  $w_{i}$ so that: 
\begin{equation}
w_i = 
    \begin{cases}
    \displaystyle n-i, & \text{if } i \text{ is even}\\ \notag
    \displaystyle
       n+i, & \text{if } \displaystyle i \text{ is odd and $i\neq  n$}\\ \notag
       \displaystyle n-1, & \text{if $i=n$} 
    \end{cases}
\end{equation}
\noindent
We define two intervals: 
\begin{align*}
    I_1=[2n-1,2n+1]\quad I_2=[n,n]
\end{align*}
\noindent
In Fig.~\ref{fig:c2} we depict an example of the $2$-witness graph for $C_7$. Consider any two arbitrary vertices $v_i, v_j$, with $v_i\neq v_j$. There are three cases to consider: 
\begin{itemize}
 \item[(a)] Both $i$ and $j$ are even. In this case $w_i +  w_j=2n-(i+j) <2n-1$ (where the last inequality follows from $i+j\geq 6$). Thus $w_i +  w_j\not \in I_1$. Moreover $i+j$ is even thus $w_i +  w_j=2n-(i+j)$ is even too. Hence $w_i +  w_j\not\in I_2$.
     \item[(b)] Both $i$ and $j$ are odd. First assume $i$ and $j$ different from $n$. In this case  $w_i +  w_j=2n+(i+j)>2n+1$ (where the last inequality follows from $i+j\geq 4$). Thus $w_i +  w_j\not\in I_1\cup I_2$.
     Assume now that $i=n$, in this case we have $w_n +  w_j=n-1+n+j=2n+j-1$. If $j=1$ (\textit{i.e.} we are considering the edge $v_n v_1$) then $w_n+w_j=2n \in I_1$.
     Otherwise, for $j\geq 3$, it holds  that $w_n+w_j \geq 2n+2$. Thus,  $w_n +  w_j \not\in I_1 \cup I_2$.
    \item[(c)] The vertices $i$ and $j$ have different parity and \textit{w.l.o.g.}  let $i$ be even and $j$ be odd. Assume first that $j\neq n$. 
    In this case  $w_i +  w_j=2n-i+j$.  Clearly, $w_i +  w_j \in I_1$ if and only  $-1\leq j-i \leq 1$ and since $i\neq j$ it must be $i=j+1$ or $i=j-1$ which correspond to edges of $P_{n-1}=v_1, v_2, \ldots, v_{n-1}$.
    Consider now the case $j=n$. We have $w_i +  w_j=2n-i-1$.
    Clearly, as $i \geq 2$, $w_i +  w_j \leq 2n-3 \not \in I_1$.
    Finally,  $w_i +  w_j \in I_2$ if and only if $i=n-1$. This  corresponds to the edge $v_{n-1} v_n$ of $C_n$. 
\end{itemize}
\noindent
Thus there is an edge among $i$ and $j$ if and only if  $v_iv_j$ is an edge in $C_n$.
This concludes the proof.
\end{proof}

\begin{figure}
\centering
    \begin{subfigure}[b]{0.41\textwidth}
    \centering
    \includegraphics[width=\linewidth]{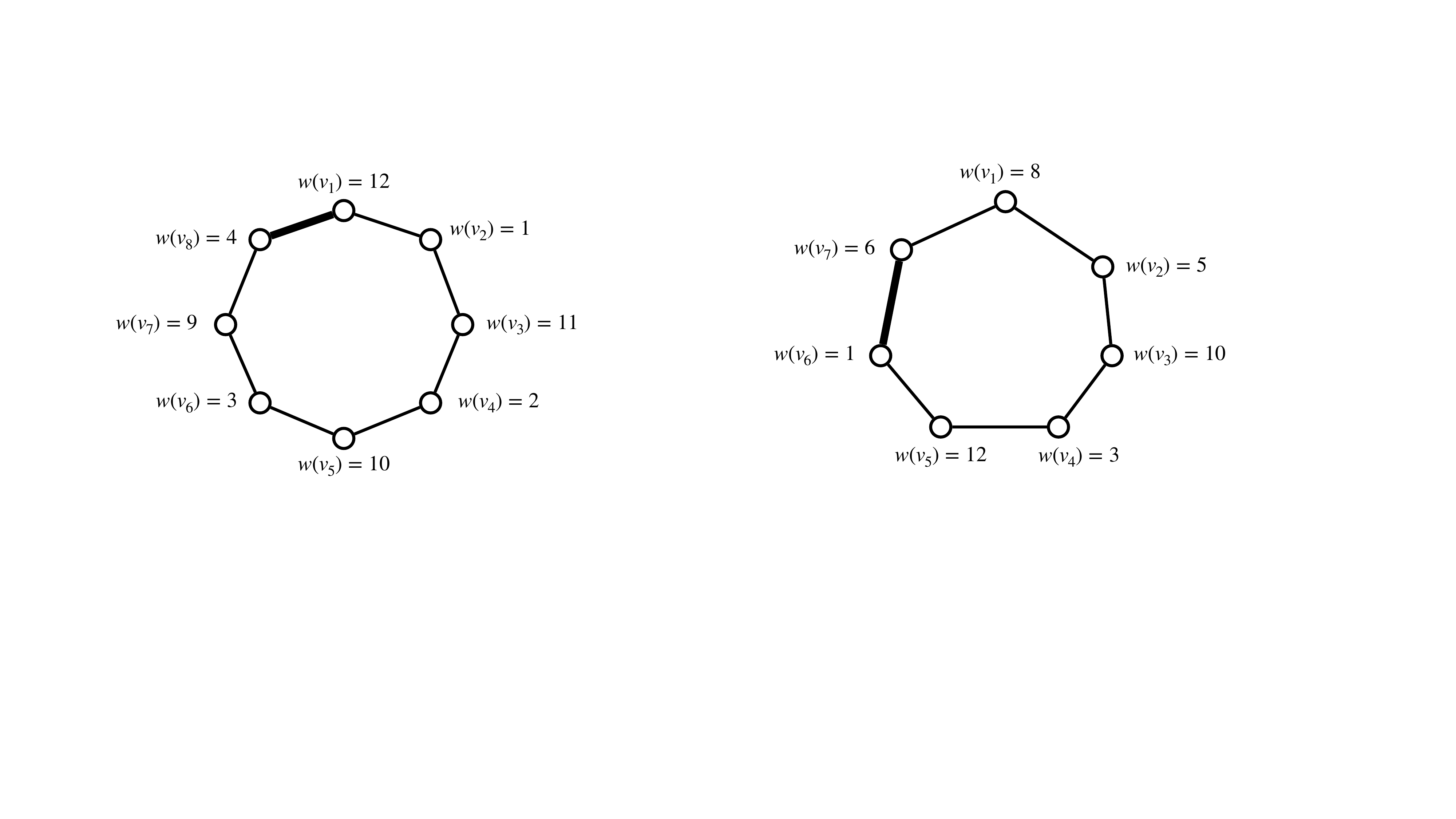}
    \caption{\label{fig:c1}}
    \end{subfigure}
    \quad \quad
    \begin{subfigure}[b]{0.4\textwidth}
    \centering
    \includegraphics[width=\linewidth]{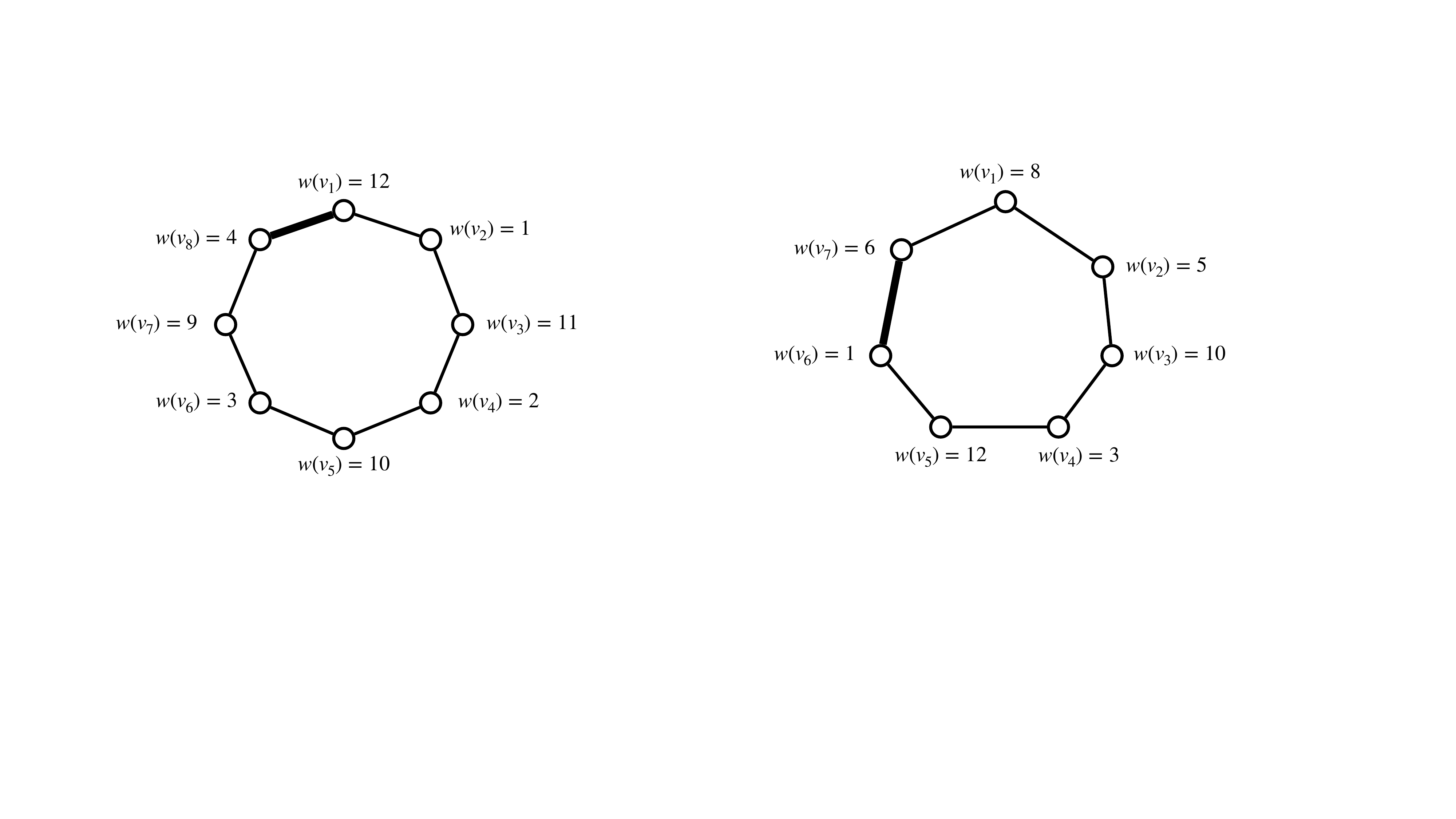}
    \caption{\label{fig:c2}}
    \end{subfigure}
\caption{(\subref{fig:c1}) The $2$-witness graph for $C_8$, the intervals are $I_1=[12,13]$ and $I_2=[16, 16]$. (\subref{fig:c2}) The $2$-witness graph for $C_7$; the two intervals are $I_1=[13,15]$ and $I_2=[7, 7]$. The normal and bold edges in the cycle correspond to edges for which the sum of the weights of their endpoints falls in $I_1$ and $I_2$, respectively.}\label{fig:cycle}
\end{figure}

\section{The star number of grid graphs}\label{sec:grids}
In \cite{Jamison20} it has been shown that every graph with minimum degree $\delta$ in which any two distinct vertices have at most $c$ neighbors in common is not a $[2(\delta-c-1)-1]$-threshold graph. Since any $d$-dimensional grid has minimum degree  $\delta = d$ and any two vertices can share at most $c=2$ neighbors by Observation~\ref{prop:2k-threshold} we have that the star number of a $d$-dimensional grid is at least $\delta-3$. Thus, the following theorem holds.
\medskip
\begin{theorem}[\cite{Jamison20}]
Given an integer $d\geq 5$, for any $d$-dimensional grid $G$ it holds $\gamma(G) \geq d-3$.   
\end{theorem}
\medskip
\noindent
In this section we analyse the cases $d=2$ and $d=4$. To simplify the notation without overburdening it, we introduce the following convention: for any vertex $u$ described by its coordinates $(i_1, \ldots, i_d)$, we will write $w(i_1, \ldots, i_d) = w(u)$.

\medskip
\begin{theorem}
For any $2$-dimensional grid $G_{n_1,n_2}$ with $\min\{n_1, n_2\} \leq 2$ it holds $\gamma(G_{n_1,n_2})=1$.
\end{theorem}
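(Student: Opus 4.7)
The plan is to split on the value of $\min\{n_1,n_2\}$; without loss of generality assume $n_2 \le n_1$. If $n_2 = 1$, then $G_{n_1,1}$ is the path $P_{n_1}$, which is already known to be a star-$1$-PCG \cite{Xiao2020,Papan2022}, so this case is immediate.

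For the main case $n_2 = 2$, write $n = n_1$ and label the vertices as $(i,0),(i,1)$ for $1 \le i \le n$. I would exhibit an explicit weighted star together with the interval $I = [n+1, n+2]$. For row $0$ I set
\[
u_i := w((i,0)) = \begin{cases} k & \text{if } i = 2k-1,\\ n+1-k & \text{if } i = 2k,\end{cases}
\]
so that $\{u_1,\dots,u_n\} = \{1,2,\dots,n\}$; this is a known weighting realising $P_n$ as a star-$1$-PCG with interval $[n+1,n+2]$, as a parity case split shows that consecutive sums take the values $n+1$ or $n+2$, while non-consecutive sums always fall outside $[n+1,n+2]$. For row $1$ I use the reflection $v_i := w((i,1)) = M - u_i$ with $M = n + \tfrac{3}{2}$, the midpoint of $I$.

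With this choice the verification is routine. Each rung has weight $u_i + v_i = M \in I$. Each row-$1$ horizontal sum satisfies $v_i + v_{i+1} = 2M - (u_i + u_{i+1})$; since $I$ is symmetric about $M$, this lies in $I$ iff $u_i + u_{i+1}$ does, and similarly non-consecutive row-$1$ sums are outside $I$ by the row-$0$ analysis. For a cross non-edge $(i,0),(j,1)$ with $i \ne j$, one has $u_i + v_j = M + (u_i - u_j)$, which lies in $I$ iff $u_i - u_j \in [-\tfrac{1}{2},\tfrac{1}{2}]$; since the $u_i$ are distinct integers, this forces $i = j$, contradicting $i \ne j$.

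The main conceptual step is to couple the two rows correctly: an independent weighting of each row generically produces cross-row sums that fall inside $I$ for some $i \ne j$. The reflection $v_i = M - u_i$ with $M$ the midpoint of $I$ simultaneously fixes the rung sums, mirrors row-$1$ horizontal sums to row-$0$ ones, and, thanks to the integer gap of at least $1$ between distinct $u_i$, eliminates every cross non-edge in one stroke.
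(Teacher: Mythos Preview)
Your proof is correct and follows essentially the same approach as the paper: both extend a standard path weighting on one row to the ladder $G_{n,2}$ by reflecting about the midpoint of the interval (in the paper's construction the rung sums are also all equal to the midpoint $2n_1+1$ of $[2n_1,2n_1+2]$, i.e.\ row~$1$ is the reflection of row~$0$). Your presentation makes this symmetry explicit and thereby shortens the verification, at the cost of using half-integer weights where the paper keeps everything integral; otherwise the two arguments coincide.
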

\begin{proof}
If $\min\{n_1, n_2\} =1$ then the graph is a path and it is already known that it is a star-$1$-PCG \cite{Xiao2020,Papan2022}. Assume now  $\min\{n_1, n_2\} =2$ and \textit{w.l.o.g.} let $n_2=2$. It is worth mentioning that it is already known that $G_{n_1,2}$  is a PCG but the witness tree is a caterpillar \cite{Sammi2013}.  We define a $1$-witness graph for $G_{n_1,2}$ as follows:  For each vertex $(i,j)$ with $0\leq i \leq n_1-1$ and $0 \leq j \leq 1$, we define its weight  $w(i,j)$, as follows:
\begin{equation}
w(i,j) = 
    \begin{cases}
    \displaystyle
       2n_1-i & \text{if } i+j \text{ is even}\\ \notag
       i+1 & \text{if } i +j \text{ is odd }
    \end{cases}
\end{equation}
\noindent
We define 
$$
I_1=[2n_1,2n_1+2]
$$
See Fig.~\ref{fig:gr4x2} for a construction of a $1$-witness graph for $G_{2,4}$.  
We now show that $G^w$ is a $1$-witness graph for $G_{n_1,2}$. For this let  $(i,j)$ and $(i',j')$ be two vertices and we consider the following three cases:

    \paragraph{Case $j=j'=0$.} Notice that we must have $i\neq i'$ and $i+i'\leq n_1-1+n_1-3= 2n_1-4$. We consider the following three subcases:
    \begin{itemize}
        \item \textit{Both $i$ and $i'$ are odd}.  We have $w(i,0)+w(i',0)=i+i'+2\leq 2n_1-2 \not \in I_1$.
        
        \item \textit{Both $i$ and $i'$ are even}. We have $w(i,0)+w(i',0)=4n_1-(i+i')\geq 2n_1+4 \not\in I_1$.
        
        \item \textit{$i$ and $i'$ have different parity}. \textit{W.l.o.g.} assume $i$ odd and $i'$ even. Then $w(i,0)+w(i',0)=2n_1+i-i'+1$.Thus, as $I_1=[2n_1,2n_1+2]$ it must hold that either $2n_1+i-i'+1=2n_1$ or $2n_1+i-i'+1=2n_1+2$. hus, $w(i,1)+w(i',1)\in I_1$ if and only if $|i-i'|=1$, which  corresponds to the edges of $G_{n_1,2}$ for which  
        $|i-i'|=1$ and $j=j'=0$.
        \end{itemize}    
\paragraph{Case $j=j'=1$.} This case is symmetrical to the previous one. Thus, $w(i,1)+w(i',1)\in I_1$ if and only if $|i-i'|=1$, which  corresponds to the edges of $G_{n_1,2}$ for which  
        $|i-i'|=1$ and $j=j'=1$.
         
\paragraph{Case $j$ and $j'$ of different parity.} \textit{W.l.o.g.} assume $j=0$ and $j'=1$. Then we consider the following three subcases:
        \begin{itemize}
        \item \textit{Both $i$ and $i'$ are odd}. We have  $w(i,0)+w(i',1)=2n_1+i-i'+1$. If $i\neq i'$ then $|i-i'|\geq 2$ and thus either  $w(i,0)+w(i',1) \geq 2n_1+3 \not\in I_1$ or $w(i,0)+w(i',1) \leq 2n_1-1 \not\in I_1$. Otherwise, if $i=i'$ then $w(i,0)+w(i',1)=2n_1+1 \in I_1$ which corresponds to the edges of $G_{n_1,2}$ for which  
        $i=i'$ and $|j-j'|=1$.  
        
        \item \textit{Both $i$ and $i'$ are even}. We have $w(i,0)+w(i',1)=2n_1-i+i'+1$ and the case follows identical to the previous one.  Thus, $w(i,1)+w(i',1)\in I_1$ if and only if $i=i'$, which  corresponds to the edges of $G_{n_1,2}$ for which  
        $i=i'$ and $|j-j'|=1$.  
        
        \item \textit{$i$ and $i'$ have different parity}. Notice that we must have $i\neq i'$ and thus $i+i'\leq 2n_1-4$. Assume first $i$ odd and $i'$ even. Then $w(i,0)+w(i',1)=i+i'+2\leq 2n_1-2 \not\in I_1$. Otherwise, if  $i$ even and $i'$ odd. Then $w(i,0)+w(i',1)=4n_1-(i+i')\geq 2n_1+4\not\in I_1$.
        \end{itemize}    
\noindent
Thus, $G_{n_1,2}^w$ is a $1$-witness graph for $G_{n_1,2}$. \end{proof}

\begin{figure}
\centering
    \begin{subfigure}[t]{0.38\textwidth}
    \centering
    \includegraphics[width=\linewidth]{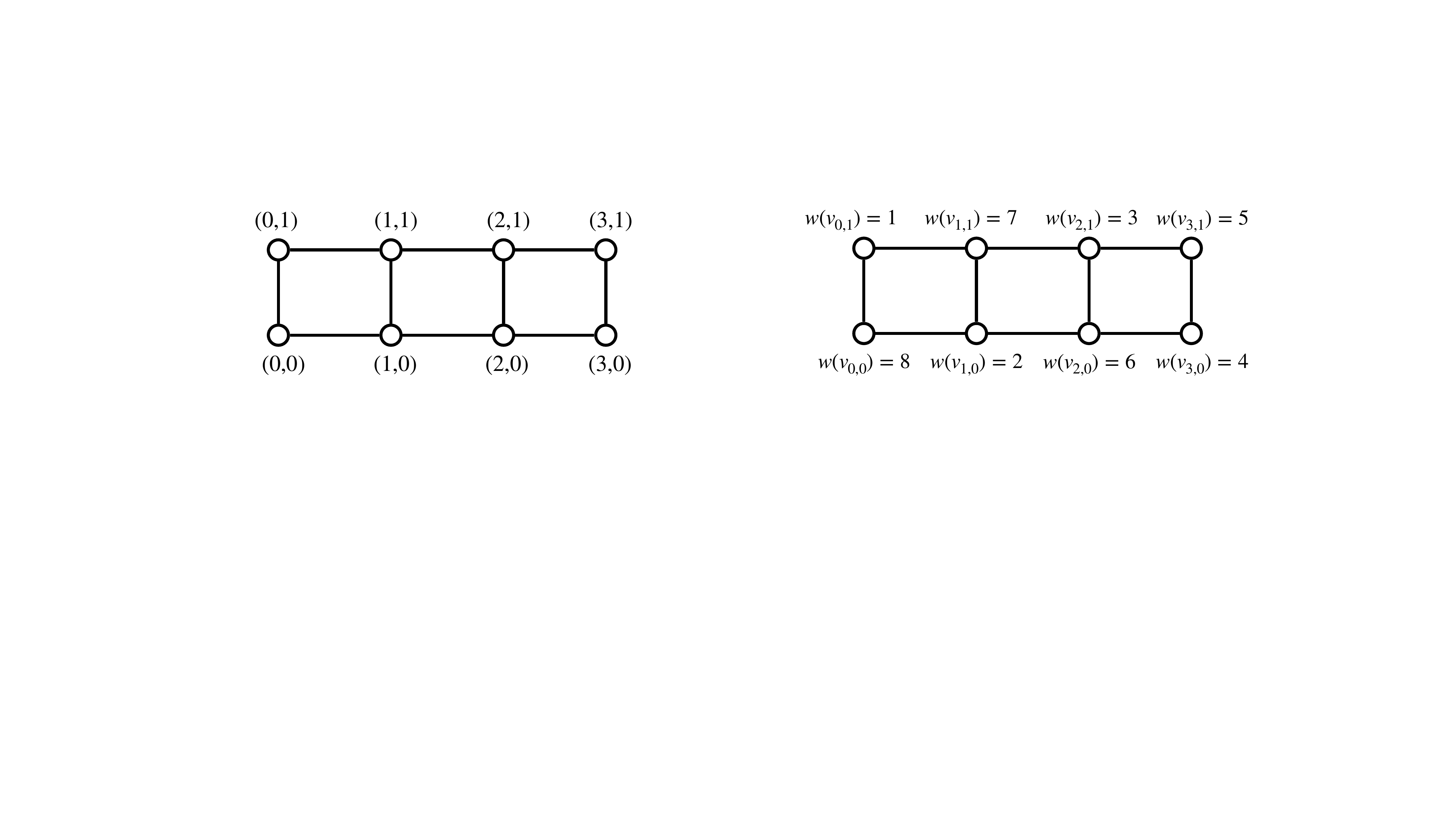}
    \caption{\label{fig:grid1}}
    \vspace{0.15cm}
    \end{subfigure}
    \quad
    \begin{subfigure}[t]{0.48\textwidth}
    \centering
    \includegraphics[width=\linewidth]{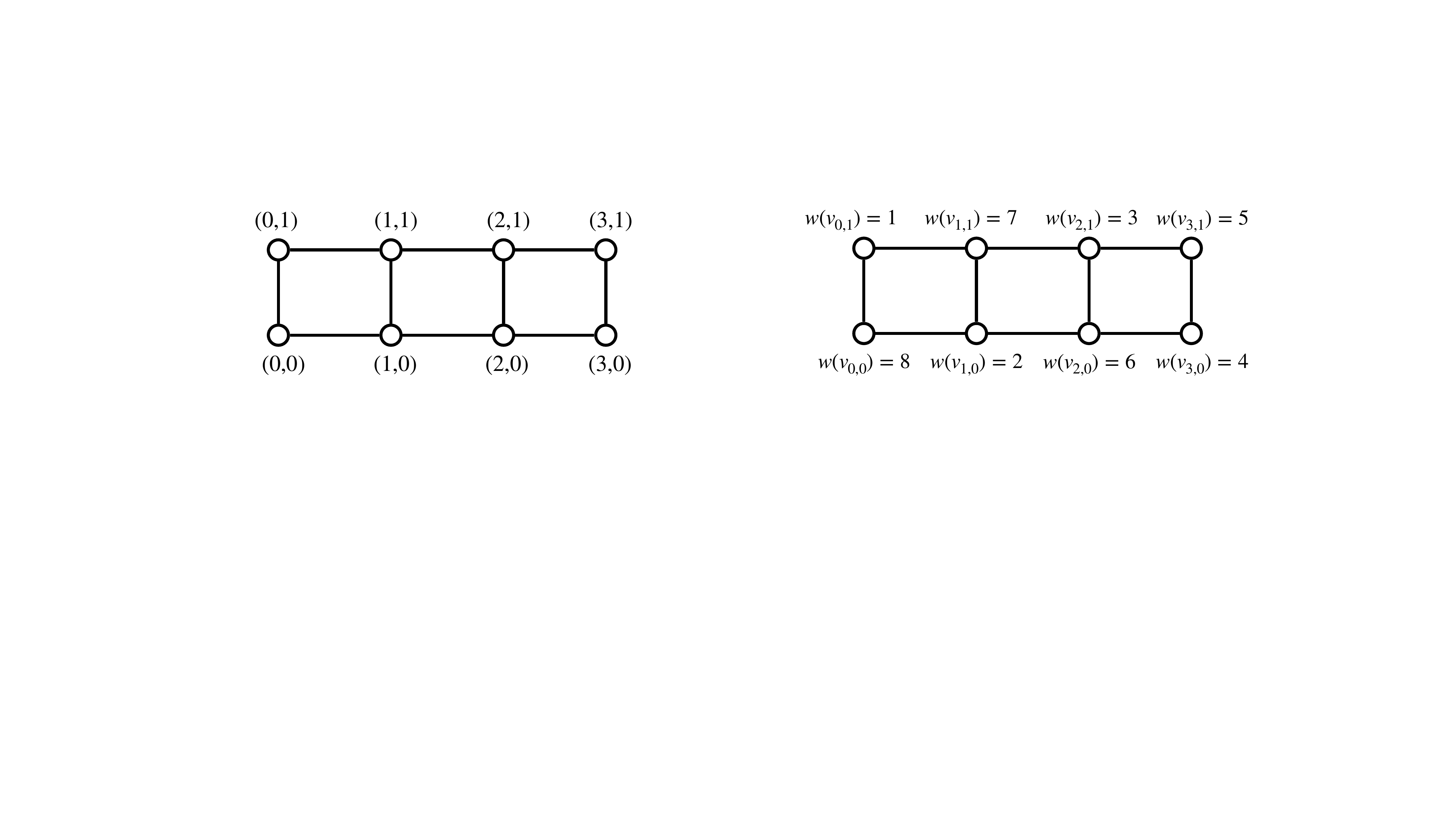}
    \caption{\label{fig:grid2}}
    \end{subfigure}
\caption{(\subref{fig:grid1}) The grid $G_{4,2}$. (\subref{fig:grid2}) The $1$-witness graph for $G_{4,2}$ with interval $I_1=[8,10]$.}\label{fig:gr4x2}
\end{figure}

\medskip
\begin{theorem}
For any $2$-dimensional grid $G_{n_1,n_2}$ with $n_1, n_2\geq 3$ it holds \mbox{$\gamma(G_{n_1,n_2})=2$.}
\end{theorem}
\begin{proof}
Notice that the cycle graph $C_8$ is an induced subgraph of $G_{n_1,n_2}$ with $n_1,n_2\geq 3$. By Lemma~\ref{lem:remove_vertices} and Theorem~\ref{theo:cycle}  we have $\gamma(G_{n_1,n_2})\geq 2$.
We now prove that any $2$-dimensional grid graph is a star-$2$-PCG. Notice that if a graph $G$ is a star-$2$-PCG, then so is any vertex induced subgraph of $G$. Hence, it is sufficient to focus on the case where $n_1=n_2=h$. Indeed the construction for any $G_{n_1,n_2}$ can be obtained by the one of $G_{h,h}$ where $h=\max \{n_1, n_2\}$. Let $G=G_{h,h}$ and for each vertex $(i,j)$, we define its weight $w(i,j)$, as follows:
\begin{equation}
w(i,j) = 
    \begin{cases}
    \displaystyle
       \frac{\displaystyle (i+j-1)h}{\displaystyle 2} {\displaystyle + i +1}, & \text{if } i+j \text{ is odd}\\ \notag
     (2h-1)h - \frac{\displaystyle (i+j)h}{\displaystyle 2} {\displaystyle - i}, & \text{if } i+j \text{ is even}
    \end{cases}
\end{equation}
\noindent
We define two intervals: 
\begin{align*}
    I_1&=[2h(h-1),2h(h-1)+1]\\ \notag
    I_2&=[2h(h-1)+h+1,2h(h-1)+h+2]
\end{align*}
\noindent
See Fig.~\ref{fig:grid_constr} for a construction of a $2$-witness graph for $G_{4,4}$.  By construction two vertices $(i,j)$ and $(i',j')$  are adjacent if the sum of their weights is one of the $4$ integer values in $ S=\{2h(h-1), 2h(h-1)+1, 2h(h-1)+h+1, 2h(h-1)+h+2\}$. Consider two vertices $(i,j)$, $(i',j')$ in the grid with $0\leq i,j,i',j' \leq h-1$. There are two cases to consider. 
\noindent
\paragraph{Case $i+j$  and $i'+j'$  have the same parity.} By definition of a $2$-dimensional grid these vertices are not adjacent.  Consider first the case where  $i+j$ and $i'+j'$ are both odd. Notice that $ w(i,j)=\frac{(i+j-1)h}{2} + i +1$  is maximized for $i=h-1$ and $j=h-2$ (notice that as $i+j$ is odd we cannot have $i=j=h-1$. Hence, $ w(i,j)+w(i',j') < (2h-4)h+2h=(2h-2)h$ and thus is not in $S$ where the last inequality holds as $(i,j)\neq (i',j')$. 

Consider now the case $i+j$ and $i'+j'$ are both even. Notice that $ w(i,j)=(2h-1)h - \frac{\displaystyle (i+j)h}{\displaystyle 2} {\displaystyle - i}$ is minimized for $i=h-1$ and $j=h-1$. Hence, $ w(i,j)+w(i',j') > 2h(h-1)+2$  and thus is not in $S$. 
\noindent
\paragraph{Case $i+j$  and $i'+j'$  have the different parity.} \textit{W.l.o.g.} assume $i+j$ odd and $i'+j'$ even and thus: 
\normalsize
$$
w(i,j)+w(i',j') = (2h-1)h + (i-i'+j-j'-1)\frac{h}{2} + i - i' +1  
$$
\noindent
We consider now for what values of $i,j,i',j'$ we have $w(i,j)+w(i',j')=s \in S$. To this purpose we  solve the following equations for each possible value of $s$.
\begin{itemize}
    \item In the case  $s=2h(h-1)$  we obtain the equation $(i-i'+j-j'+1)h+2(i - i'+1)=0$. Let $c=i-i'+j-j'+1$ and we consider for which values of $c$ the equation has solutions. Notice first that for $c\leq -3$ and $c\geq 3$ there are no solutions as  $-2h\leq 2(i - i'+1)\leq 2h $ (recall that $0\leq i,j,i',j'\leq h-1$). Moreover, as $i+j$ is odd and $i'+j'$ is even we have that $c$ must be even. Thus, the only possible cases that remain to consider are $c\in \{-2,0,2\}$.  If $c=2$ then $2h+2(i-i'+1)=0$ and thus $i-i'+1=-h$. From this  and $i-i'+j-j'+1=2$ we have $j-j'=h+2$ which is not possible since $j-j'\leq h-1$. If $c=-2$ then $-2h+2(i-i'+1)=0$ and thus $i-i'+1=h$. From this  and $i-i'+j-j'+1=-2$ we have $j-j'=-(h+2)$ which is not possible since $j-j'\geq -(h+1)$.  The only possibility is $c=0$ and as a consequence $2(i - i'+1)=0$ from which we have $i=i'-1$. Then as $i-i'+j-j'+1=0$ we have $j=j'$. 
 
 \item In the case  $s=2h(h-1)+1$ following a similar argument as in the previous point we have the only possibility is $i=i'$ and $j=j'-1$.

\item In the case  $s = 2h(h - 1) + h + 1$ following a similar argument as in the previous point we have the only possibility is $i=i'$ and $j=j'+1$.

\item In the case  $s = 2h(h - 1) + h + 2$ following a similar argument as in the previous point we have the only possibility is $i=i'+1$ and $j=j'$.
    
\end{itemize}

\noindent
From the previous four items we have that two vertices $(i,j)$, $(i',j')$ are adjacent if and only if $i=i'$ and $|j-j'|=1$ or $j=j'$ and $|i-i'|=1$ and $j=j'$. This concludes the proof.
\end{proof}

\begin{figure}
\centering
    \begin{subfigure}[t]{0.32\textwidth}
    \centering
    \includegraphics[width=\linewidth]{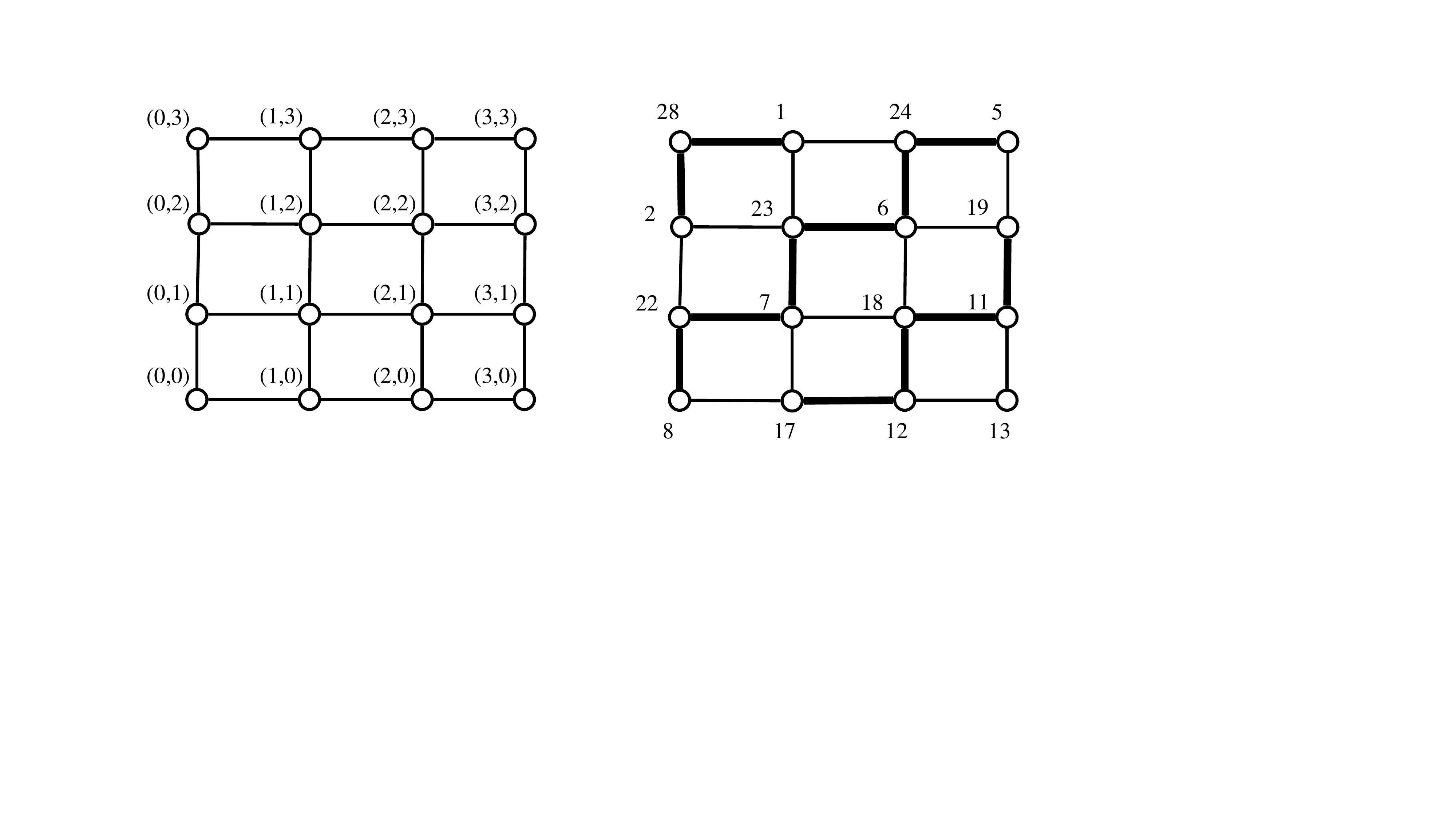}
    \caption{\label{fig:4g1}}
    \end{subfigure}
    \qquad \qquad 
    \begin{subfigure}[t]{0.32\textwidth}
    \centering
    \includegraphics[width=\linewidth]{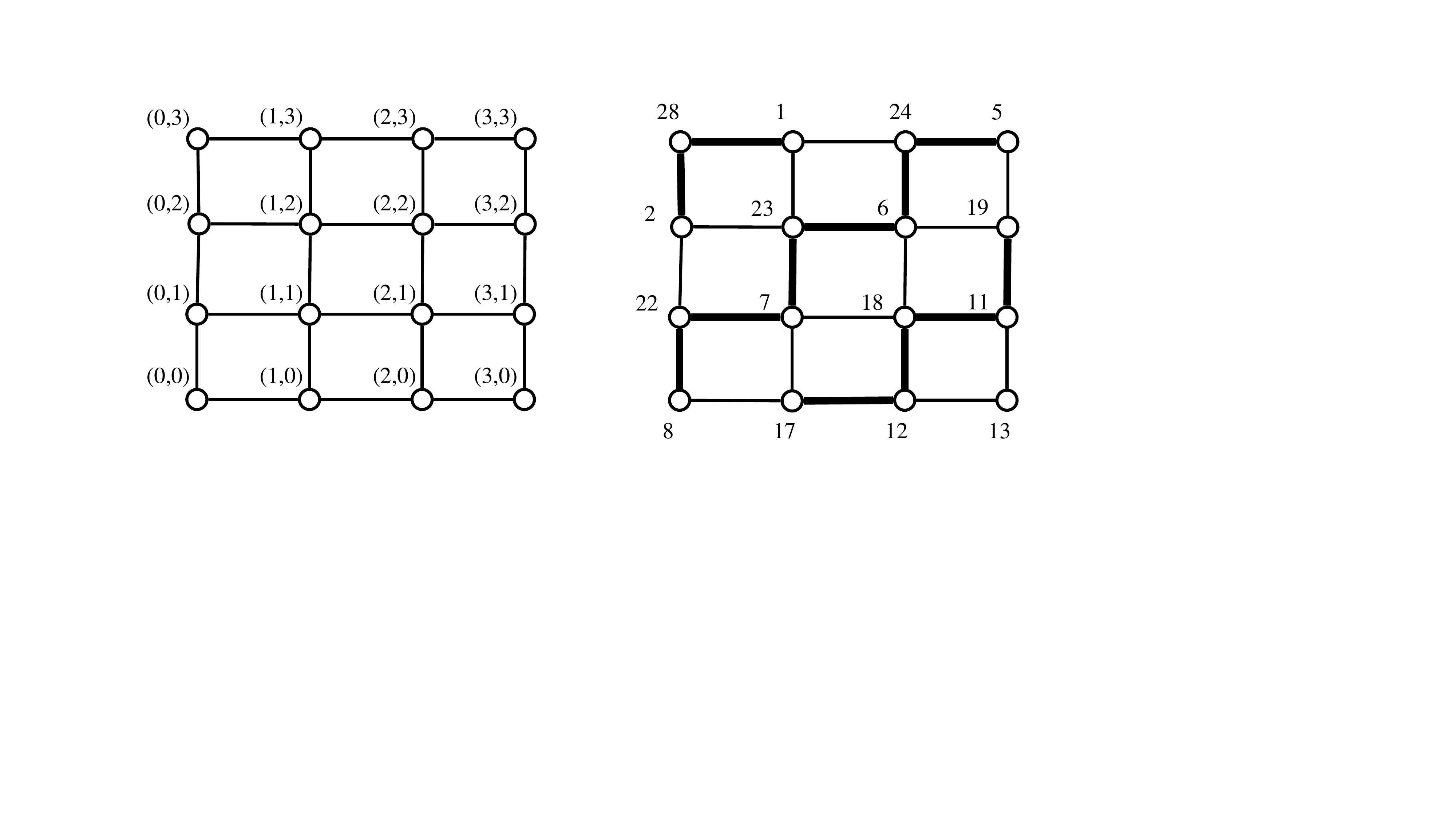}
    \caption{\label{fig:4g2}}
    \end{subfigure}
\caption{(\subref{fig:4g1}) The grid $G_{4,4}$. (\subref{fig:4g2}) The corresponding $2$-witness graph  graph.  The two intervals are $I_1=[24,25]$ and $I_2=[29,30]$. The normal and bold edges correspond to edges for which the sum of their endpoints falls in $I_1$ and $I_2$, respectively.}\label{fig:grid_constr}
\end{figure}

We now consider $4$-dimensional grids.

\medskip
\begin{theorem}
Given four positive integers $n_1, n_2, n_3,n_4 \geq 3$, for any $4$-dimensional grid $G_{n_1,n_2,n_3,n_4}$it holds $\gamma(G_{n_1,n_2,n_3,n_4}) \geq 3$.
\end{theorem}
\begin{proof}
Consider $G=G_{n_1,n_2,n_3,n_4}$, and let $G^{\omega}$ be any of its $k$-witness graphs. We will show that  in $G^{\omega}$  there exists a vertex $x$ with $v_1, v_2, v_{3}$ in $N(x)$ and  $u_1, u_2$ not in $N(x)\cup \{x\}$ such that \mbox{$w(v_1) \leq w(u_1) \leq w(v_2)\leq  w(u_2)\leq w(v_{3})$} and the proof will follow from Lemma \ref{lem:kFP} since the existence of the vertex $x$ implies the presence in $G^{\omega}$ of the  $2$-FP $x v_1, x u_1, xv_2, xu_2, xv_3$.

\noindent 

Consider the vertex  $a=(1,1,1,1)$. Notice that $a$ has exactly  $8$ neighbors and by Lemma \ref {lem:twins} all these vertices have different weights, \textit{w.l.o.g.} let $b_1, \ldots, b_8$ be these  neighbors and $w(b_1)< w(b_2)< \ldots <w(b_8)$. We consider now two cases:

\paragraph{Case I. } 
{\em There exist two vertices $u_1,u_2 \not\in N(a)$ and two integers $i$ and $j$,  $1\leq i<j<7$ such that  it holds $w(b_i)<w(u_1)<w(b_{i+1})$ and $w(b_j)<w(u_2)<w(b_{j+1})$.}
In this case we have $x=a$ and $v_1=b_1$, $v_2=b_{i+1}$, $v_{3}=b_8$.

\paragraph{Case II. }{\em  There exists an integer $i$, $1\leq i\leq 7$ such that for any $u\not\in N(a)$  one of the followings holds: (ii) $w(u) < w(b_1)$, (ii) $w(b_i) < w(u) < w(b_{i+1})$, (iii) $w(u) > w(b_8)$.} We will consider only the cases $1\leq i\leq 4$ as the reasoning in the cases for $i>4$ is identical to the cases $8-i$. 

\noindent
We  make use of the following definitions.

Two neighbors  $b$ and $b'$ of vertex $a$  are called \emph{\oppos}\  if there exists a dimension $i$ for which $|b_i - b'_i| = 2$ and for all $j\neq i$, $b_j = b'_j$.  Notice that the $8$ neighbors of  $a$ are  partitioned in $4$ pairs of \oppos\ vertices. 
Consider now  any two  neighbors $b_i$ and $b_j$ of $a$ that are not \oppos. Then  there exists exactly one vertex $y$ different from $a$, such that $N(a) \cap N(y)=\{b_i, b_j\}$.
We denote this vertex  as  $Q^a_{b_i, b_j}$.

\paragraph{Case II.a. $1\leq i\leq 2$.}  At least one between
the pairs $(b_4, b_6)$ and $(b_4, b_7)$ is not an opposed pair. \textit{W.l.o.g.} let $(b_4, b_6)$  be such a pair. Let $y = Q^a_{b_4, b_6}$ and consider an arbitrary vertex $v \in N(y)-\{b_4,b_6\}$. Since $v \not\in N(a)$ we have that one of the followings must hold: (i) $w(v) < w(b_1)$, (ii) $w(b_i) < w(v) < w(b_{i+1})$, (iii) $w(v) > w(b_8)$.  We consider each case separately and prove that in all the cases we have $x=y$. 

\begin{itemize}
    \item[(i)] If $w(v) < w(b_1)$ we can set $v_1=v$, $v_2=b_4$, $v_3=b_6$ and $u_1=b_1$, $u_2=b_5$. 
    \item[(ii)] If $w(b_i) < w(v) < w(b_{i+1})$ we can set $v_1=v$, $v_2=b_4$, $v_3=b_6$ and $u_1=b_3$, $u_2=b_5$. 
    \item[(iii)] If $w(v) > w(b_8)$ we can set $v_1=b_4$, $v_2=b_6$, $v_3=v$ and $u_1=b_5$, $u_2=b_8$. 
\end{itemize}

\paragraph{Case II.b. $3\leq i\leq 4$. } At least one between
the pairs $(b_2, b_6)$ and $(b_2, b_7)$ is not an opposed pair. \textit{W.l.o.g.} let $(b_2, b_6)$  be such a pair. Let $y = Q^a_{b_2, b_6}$,  similarly to the previous case let  $v \in N(y)-\{b_2,b_6\}$.  Since $v \not\in N(a)$ we have that one of the followings must hold: (i) $w(v) < w(b_1)$, (ii) $w(b_i) < w(v) < w(b_{i+1})$, (iii) $w(v) > w(b_8)$.  We consider each case separately and prove that in all the cases we have $x=y$.

\begin{itemize}
    \item[(i)] If $w(v) < w(b_1)$ we can set $v_1=v$, $v_2=b_2$, $v_3=b_6$ and $u_1=b_1$, $u_2=b_5$. 
    \item[(ii)] If $w(b_i) < w(v) < w(b_{i+1})$ we can set $v_1=b_2$, $v_2=v$, $v_3=b_6$ and $u_1=b_3$, $u_2=b_5$. 
    \item[(iii)] If $w(v) > w(b_8)$ we can set $v_1=b_2$, $v_2=b_6$, $v_3=v$ and $u_1=b_5$, $u_2=b_8$. 
\end{itemize}
This concludes the proof.
\end{proof}

\section{Conclusions and open problems}
In this paper we consider the problem of characterizing  star-$k$-PCGs. This is particularly interesting as this class connects two important graph classes: the PCGs and multithreshold graphs for both of which a complete characterization is not yet known.  Here we investigate the star number, $\gamma$, of simple graph classes, such as graphs of small size, caterpillars, cycles and grids.  Specifically, we determine the exact value of $\gamma(G)$ for all the graphs with at most 7 vertices. By doing so we  show that the smallest graphs with star number 2 are only 4 and have exactly 5 vertices; the smallest graphs with star number 3 are only 3 and have exactly 7 vertices. Next, we provide a construction showing that the star number of caterpillars is one. Moreover, we show that the star number of cycles and two dimensional grid graphs is 2 and that for  $4$-dimensional grids the star number is at least 3. Many  problems remain open.

\medskip
\begin{problem}
For a $d$-dimensional grid $G$, with $d\neq 2$, determine the value of $\gamma(G)$.
\end{problem}
\medskip

Given a graph $G$, from Lemma~\ref{lem:kFP} it follows that if for any ordering  $\pi$ of the vertices, the presence of a $k$-FP can be deduced, then $\gamma(G) > k$. However, even if we find an ordering $\pi$ in which no $k$-FP appears, we currently lack a method to obtain a $k$-witness graph $G^w$. Thus we have
\medskip
\begin{problem}
Given a graph $G$, is it true that the existence of an ordering  of the vertices $\pi$ such that no $k$-FP can be deduced, implies the existence of a weight function $w$, for which $G^w$ is a $k$-witness graph and $\sigma(G^w)=\pi$?
\end{problem}
\medskip

For $k=1$ a similar result has been proved in \cite{Xiao2020} so a positive answer to the previous problem can be seen as a generalization of the result in  \cite{Xiao2020} to an arbitrary $k$. Notice that this is can be interesting as in \cite{Xiao2020} it lead to a polynomial time algorithm for the recognition of star-$1$-PCGs.  Thus, this may be an important step toward the solution of the following problem.

\medskip
\begin{problem}
Determine the computational complexity of recognizing star-$k$-PCGs for $k\geq 2$.
\end{problem}
\medskip


From the results of Section~\ref{sec:smallgraphs} we have that for all graphs with at most 7 vertices,  the graph in Fig.~\ref{fig:tree_notPCG} is the only acyclic graph that is not a star-$1$-PCG. To the best of our knowledge, there are no acyclic graphs with a star number of at least 3 documented in the  literature. 

\begin{figure}
\centering
    \includegraphics[scale=0.2]{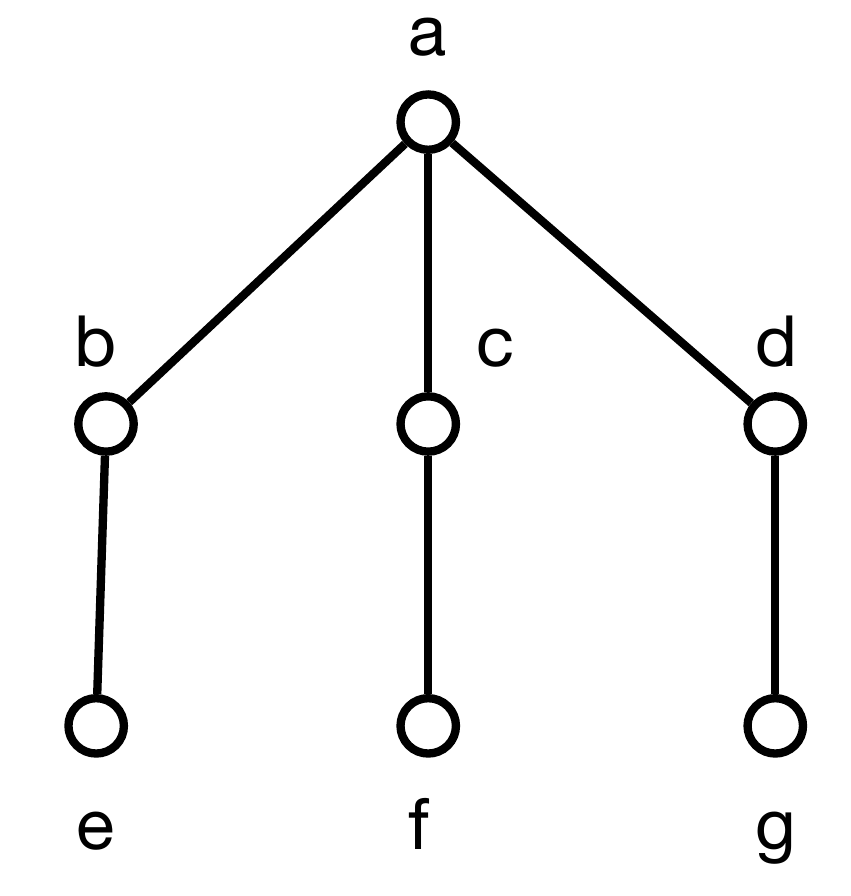}
    \caption{The smallest tree known not to be a star-$1$-PCG.}\label{fig:tree_notPCG}
\end{figure}

\medskip
\begin{problem}
Is there an acyclic graph $G$ for which $\gamma(G) > 2$?   
\end{problem}

\noindent
More generally,
\medskip
\begin{problem}
 What is the star number of acyclic graphs?   
\end{problem}
\medskip

\bibliographystyle{plain}
\bibliography{my-references}%

\end{document}